\documentclass[a4paper,reqno,12pt]{amsart}
\usepackage{t1enc}
\usepackage[margin=1.0in]{geometry}
\usepackage{ifthen}
\usepackage{graphicx}
\usepackage{xcolor}

\newcommand{\R}{\mathbf{R}}

\newcommand{\pr}{\mathbf{P}}
\newcommand{\ex}{\mathbf{E}}

\theoremstyle{plain}
\newtheorem{theorem}{Theorem}
\newtheorem{lemma}{Lemma}
\newtheorem{corollary}{Corollary}
\newtheorem{proposition}{Proposition}

\theoremstyle{definition}

\theoremstyle{remark}

\newcommand{\formula}[2][nolabel]
{\ifthenelse{\equal{#1}{nolabel}}
 {\begin{align*} #2 \end{align*}}
 {\ifthenelse{\equal{#1}{}}
  {\begin{align} #2 \end{align}}
  {\begin{align} \label{#1} #2 \end{align}}
 }
}

%
%

\sloppy
\numberwithin{equation}{section}

\begin{document}

%
%

\title[Sharp estimates of transition probability density for Bessel
process in half-line]{Sharp estimates of transition probability density for Bessel
process in half-line}
\author{Kamil Bogus, Jacek Ma{\l}ecki}
\address{Kamil Bogus, Jacek Ma{\l}ecki \\ Institute of Mathematics and
Computer Science \\ Wroc{\l}aw University of Technology \\ ul.
Wybrze{\.z}e Wyspia{\'n}\-skiego 27 \\ 50-370 Wroc{\l}aw,
Poland
}
\email{kamil.bogus@pwr.wroc.pl, jacek.malecki@pwr.wroc.pl}

\keywords{transition probability density, heat kernel, Bessel process, sharp estimate, half-line}
\subjclass[2010]{60J60}

\begin{abstract} In this paper we study the Bessel process $R_t^{(\mu)}$ with index $\mu\neq 0$ starting from $x>0$ and killed when it reaches a positive level $a$, where $x>a>0$. We provide sharp estimates of the transition probability density $p_a^{(\mu)}(t,x,y)$ for the whole range of space parameters $x,y>a$ and every $t>0$.
\end{abstract}

\maketitle

\section{Introduction}
\label{section:introduction}
Let $R_t^{(\mu)}$ be the Bessel process with index $\mu\neq 0$. The transition probability density (with respect to the Lebesgue measure) of the process is expressed by the modified Bessel function in the following way
   \formula[eq:transitiondensity:formula]
      {
p^{(\mu)}(t,x,y) = \frac1t \left(\frac{y}{x}\right)^{\mu}y
\exp\left(-\frac{x^2+y^2}{2t}\right)I_{|\mu|}\left(\frac{xy}{t}\right)\/,\quad
x,y,t> 0\/.}
Our main goal is to describe behaviour of densities of the transition probabilities for the process $R_t^{(\mu)}$ killed when it leaves a half-line $(a,\infty)$, where $a>0$. Note that if the process starts from $x>a$ then the first hitting time $T_a^{(\mu)}$ of a level $a$ is finite a.s. when $\mu<0$ but it is infinite with positive probability when $\mu>0$. The density kernel of the killed semi-group is given by the Hunt formula
     \formula[eq:hunt:general]{
p_a^{(\mu)}(t,x,y) &= p^{(\mu)}(t,x,y)-\textbf{E}_x^{(\mu)}[t>T^{(\mu)}_a; p^{(\mu)}(t-T^{(\mu)}_a, R^{(\mu)}_{T^{(\mu)}_a},y)]\/,
}
where $x,y>a$ and $t>0$. The main result of the paper is given in 
\begin{theorem}
   \label{thm:main}
   Let $\mu\neq 0$ and $a>0$. For every $x,y> a$ and $t>0$ we have
   \formula[eq:mainthm]{
      p_a^{(\mu)}(t,x,y)\stackrel{\mu}{\approx} \left[1\wedge \frac{(x-a)(y-a)}{t}\right]\left(1\wedge \frac{xy}{t}\right)^{|\mu|-\frac{1}{2}} \left(\frac{y}{x}\right)^{\mu+\frac{1}{2}}\frac{1}{\sqrt{t}}\exp\left(-\frac{(x-y)^2}{2t}\right)\/.
      }
\end{theorem}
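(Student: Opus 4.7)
The plan is to start from the Hunt formula \eqref{eq:hunt:general}. Since $R_{T_a^{(\mu)}}^{(\mu)}=a$ on $\{T_a^{(\mu)}<\infty\}$, writing $q_{x,a}^{(\mu)}$ for the (sub)density of $T_a^{(\mu)}$ under $\mathbf{P}_x^{(\mu)}$, the formula reduces to
\[
p_a^{(\mu)}(t,x,y)=p^{(\mu)}(t,x,y)-\int_0^t p^{(\mu)}(t-s,a,y)\,q_{x,a}^{(\mu)}(s)\,ds.
\]
As a preliminary I would establish sharp two-sided bounds on the free density: combining \eqref{eq:transitiondensity:formula} with the standard asymptotics $I_{|\mu|}(z)\stackrel{\mu}{\approx} z^{|\mu|}$ for $0<z\le 1$ and $I_{|\mu|}(z)\stackrel{\mu}{\approx} z^{-1/2}e^{z}$ for $z\ge 1$ yields
\[
p^{(\mu)}(t,x,y)\stackrel{\mu}{\approx}\left(1\wedge\frac{xy}{t}\right)^{|\mu|-\frac{1}{2}}\left(\frac{y}{x}\right)^{\mu+\frac{1}{2}}\frac{1}{\sqrt{t}}\exp\!\left(-\frac{(x-y)^{2}}{2t}\right),
\]
which coincides with the right-hand side of \eqref{eq:mainthm} except for the factor $1\wedge\frac{(x-a)(y-a)}{t}$. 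In particular, the trivial bound $p_a^{(\mu)}\le p^{(\mu)}$ already settles the upper estimate in the regime $(x-a)(y-a)\ge t$.

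The next ingredient is sharp control of $q_{x,a}^{(\mu)}$, which I would obtain starting from its explicit Macdonald-function representation and again applying the Bessel asymptotics to derive a two-sided bound of the shape
\[
q_{x,a}^{(\mu)}(s)\stackrel{\mu}{\approx}\frac{x-a}{s}\left(\frac{x}{a}\right)^{\mu+\frac{1}{2}}\left(1\wedge\frac{xa}{s}\right)^{|\mu|-\frac{1}{2}}\frac{1}{\sqrt{s}}\exp\!\left(-\frac{(x-a)^{2}}{2s}\right),
\]
together with a matching estimate of $\mathbf{P}_x^{(\mu)}(T_a^{(\mu)}\le t)$ by integration. Equipped with these, I would split into two regimes. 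When $(x-a)(y-a)\ge t$, the remaining lower bound reduces to showing that the expectation subtracted in \eqref{eq:hunt:general} is at most, say, $\tfrac{1}{2} p^{(\mu)}(t,x,y)$; this is a direct calculation from the two preceding estimates, using that both $x-a$ and $y-a$ are comparable to or larger than $\sqrt{t}$ in this regime.

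The genuinely hard regime is $(x-a)(y-a)<t$, where both $p^{(\mu)}(t,x,y)$ and the integral in the Hunt formula are of strictly larger order than the claimed $p_a^{(\mu)}$, and the factor $\frac{(x-a)(y-a)}{t}$ must be produced by cancellation. The strategy I would pursue is to expand $p^{(\mu)}(t-s,a,y)$ in $s$ via the heat equation, writing
\[
\int_0^t p^{(\mu)}(t-s,a,y)\,q_{x,a}^{(\mu)}(s)\,ds = p^{(\mu)}(t,a,y)\,\mathbf{P}_x^{(\mu)}(T_a^{(\mu)}\le t)+\mathcal{E}(t,x,y),
\]
with the error $\mathcal{E}$ controlled through a time-derivative estimate on $p^{(\mu)}$ integrated against $q_{x,a}^{(\mu)}(s)$, and then to match this against a first-order expansion of $p^{(\mu)}(t,x,y)$ in $x-a$. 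Splitting the $s$-integral at the characteristic scales $(x-a)^{2}$ and $t$, and keeping track of whether $xy$ exceeds $t$ or not, should deliver matching upper and lower bounds.

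The principal obstacle is precisely this cancellation: a bare triangle inequality on the Hunt formula loses the boundary factor $\frac{(x-a)(y-a)}{t}$ entirely, so one genuinely needs the subleading terms in the asymptotics of $I_{|\mu|}$, not merely their leading order. A secondary technical nuisance is that the behaviour of the Bessel factor $(1\wedge\tfrac{xy}{t})^{|\mu|-1/2}$ flips across $|\mu|=\tfrac{1}{2}$, and the process's recurrence switches at $\mu=0$, so the cases $\mu>0$ vs.\ $\mu<0$ and $|\mu|<\tfrac{1}{2}$ vs.\ $|\mu|\ge\tfrac{1}{2}$ may each need separate verification near the diagonal and boundary scales.
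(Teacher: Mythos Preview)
There is a concrete gap in your ``easy'' regime. The claim that $(x-a)(y-a)\ge t$ forces both $x-a$ and $y-a$ to be at least comparable to $\sqrt t$ is false: with $a=1$, $x=1+\varepsilon$, $y=1+t/\varepsilon$ one has $(x-a)(y-a)=t$ while $x-a=\varepsilon$ is arbitrarily small. In this configuration $q_{x,a}^{(\mu)}$ has total mass $(a/x)^{2\mu}\approx 1$ concentrated near $s=0$, so the subtracted integral is essentially $p^{(\mu)}(t,a,y)$; whether this is bounded by $c\,p^{(\mu)}(t,x,y)$ for some $c<1$ comes down entirely to the exponential factor $\exp\!\bigl(-(x-a)(y-a)/t\bigr)$, not to any separation of $x-a$ from zero. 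Turning this heuristic into a proof requires controlling $p^{(\mu)}(t-s,a,y)/p^{(\mu)}(t,a,y)$ uniformly in $s\in(0,t)$ with the \emph{correct sign}, and this is where the paper brings in Laforgia's inequalities \eqref{MBF:ineq:upper}--\eqref{MBF:ineq:lower} together with a monotonicity argument for $w\mapsto w^{-\mu-1}e^{-(y-a)^2/(2w)}$. These are precisely the ``subleading'' Bessel inputs you reserve for the hard regime; they are already indispensable in the one you call easy.

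Your plan for $(x-a)(y-a)<t$ has the mirror weakness. The heat-equation expansion you describe is essentially the paper's route to the \emph{upper} bound (the $J_1+J_2+J_3$ decomposition in Proposition~\ref{prop:xyt:small:upper}), but the remainder $\mathcal E$ carries no a~priori sign, so it cannot by itself deliver the \emph{lower} bound. The paper obtains that lower bound by different mechanisms: it anchors to the explicitly computable kernel $p_1^{(1/2)}$ of \eqref{eq:pt1:12:formula} via the absolute-continuity relation \eqref{ac:formula}, and in the most degenerate case (both $x,y$ near $a$, $t$ large) it abandons the Hunt formula entirely and runs Chapman--Kolmogorov through the region $z,w\ge\sqrt t$, so that the boundary factor $\tfrac{(x-a)(y-a)}{t}$ arises as a product of two survival probabilities rather than from cancellation. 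Neither the $\mu=\tfrac12$ anchor nor the Chapman--Kolmogorov step appears in your outline, and without one of them I do not see how your expansion yields a matching lower bound.
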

   Here $f(t,x,y)\stackrel{\mu}{\approx} g(t,x,y)$ means that there exist positive constants $c_1$ and $c_2$ depending only on the index $\mu$ such that $c_1\leq f/g\leq c_2$ for every $x,y>a$ and $t>0$. Since the constants are independent of $a>0$, one can pass to the limit with $a\to 0^+$ and obtain the well-known estimates of $p^{(\mu)}(t,x,y)$. Since the function $I_\mu(z)$ behaves as a power function at zero and that some exponential term appears in the asymptotic expansion at infinity (see Preliminaries for the details), the behaviour of $p^{(\mu)}(t,x,y)$ depends on the ratio $xy/t$. Note that similar situation takes place in the case of $p_a^{(\mu)}(t,x,y)$, which depends  on $xy/t$ as well. It can be especially seen in the proof of Theorem \ref{thm:main}, where different methods and arguments are applied to obtain estimates  (\ref{eq:mainthm}), whenever $xy/t$ is large or small. Finally, taking into account the behaviour of $p^{(\mu)}(t,x,y)$, one can rewrite the statement of Theorem \ref{thm:main} in the following way
   \formula[eq:mainthm:rewrite]{
      \frac{p_a^{(\mu)}(t,x,y)}{p^{(\mu)}(t,x,y)} \stackrel{\mu}{\approx} \left(1\wedge \frac{(x-a)(y-a)}{t}\right)\left(1\vee \frac{t}{xy}\right)\/,\quad x,y>a\/,\quad t>0\/,
     }
  where the expression on the right-hand side of (\ref{eq:mainthm:rewrite}) should be read as the description of the behaviour of $p_a^{(\mu)}(t,x,y)$ near the boundary $a$.
   
   There are several ways to define the function $p_a^{(\mu)}(t,x,y)$ hence our result and its applications can be considered from different points of view. It seems to be the most classical approach to define the heat kernel $p_a^{(\mu)}(t,x,y)$ as the fundamental solution of the heat equation $
      \left(\partial_t-L^{(\mu)}\right)u=0$,   where $L^{(\mu)}$ is the Bessel differential operator. In the most classical case, i.e. when the operator $L^{(\mu)}$ is replaced by the classical Laplacian, the problem of finding description of the heat kernel has a very long history (see for example \cite{SC:2010} and the references within) and goes back to 1980s and the works of E.B. Davies (see \cite{DaviesSimon:1984}, \cite{Davies:1987}, \cite{Davies:1990}, \cite{Davies:1991}). However, the known results for Dirichlet Laplacian on the subsets of $\R^n$ (see \cite{Zhang:2002}) or in general on Riemannian manifolds (see \cite{SC:2010} for the references) are only qualitatively sharp, i.e. the constants appearing in the exponential terms in the upper and  lower estimates are different. Note that in our result these constants are the same and consequently, the exponential behaviour of the density is very precise. Such sharp estimates seems to be very rare.
   
   Note also that the operator $L^{(\mu)}$ plays an important r\^o{}le in harmonic analysis. However, since the set $(a,\infty)$ is unbounded, our consideration corresponds to the case when the spectrum is continuous. This operator on the set $(0,1)$ and the estimates of the corresponding Fourier-Bessel heat kernel were studied recently in \cite{NowakRoncal:2013a} and \cite{NowakRoncal:2013b}, but once again the results presented there are only qualitatively sharp, i.e. the estimates are not sharp whenever $|x-y|^2>>t$. Another essential difference between the case of bounded sets and our case is that in the first one, we can limit our considerations to $t\leq 1$, by the application of the intrinsic ultracontractivity. 
	However, the most  interesting part of Theorem \ref{thm:main} (with difficult proof) seems to be when $t$ is large.
   
   The third and our principal motivation comes from the theory of stochastic processes and the interpretation of $p_a^{(\mu)}(t,x,y)$ as a transition density function of the killed semi-group related to the Bessel process $R_t^{(\mu)}$. From this point of view, the present work is a natural continuation of the research started in \cite{BR:2006} (see also \cite{BGS:2007}), where the integral representation of the density 
	$q_x^{(\mu)}(t)$ of $T_a^{(\mu)}$ were provided together with its some asymptotics description. The sharp estimates of the density for the whole range of parameters with the explicit description of the exponential behaviour was given in \cite{BMR3:2013}. For the in-depth analysis of the asymptotic behaviour of $q_x^{(\mu)}(t)$ see \cite{HM:2013a}, \cite{HM:2012}, \cite{HM:2013b}.
   
  The case $\mu=0$ is excluded from our consideration and it will be addressed in the subsequent work. As it is very common in this theory, this case requires different methods and should be considered separately. In particular, some logarithmic behavior is expected whenever $xy<t$.

  The paper is organized as follows. In Preliminaries we introduce some basic notation and recall properties and known results related to modified Bessel functions as well as Bessel processes, which are used in the sequel. In particular, using scaling property and absolute continuity of the Bessel processes we reduced our consideration only to the case $\mu>0$ and $a=1$. After that we turn to the proof of Theorem \ref{thm:main}, which is split into two main parts, i.e. in Section \ref{section:xyt:large} we provide estimates whenever $xy/t$ is large and in Section \ref{section:xyt:small} we prove (\ref{eq:mainthm}) for $xy/t$ small. In both cases the result is given in series of propositions.

\section{Preliminaries}
\label{section:preliminaries}
\subsection{Notation}
  The constants depending on the index $\mu$ and appearing in theorems and propositions are denoted by capitals letters $C_1^{(\mu)}, C_2^{(\mu)},\ldots$. We will denote by $c_1,c_2,\ldots$ constants appearing in the proofs and to shorten the notation we will omit the superscript $\,^{(\mu)}$, however we will emphasize the dependence on the other variables, if such occurs. 
  
\subsection{Modified Bessel function}
\textit{The modified Bessel function of the first kind} is defined as (see \cite{Erdelyi:1953:volII} 7.2.2 (12))
\formula{
  I_\mu(z) = \sum_{k=0}^\infty \left(\frac{z}{2}\right)^{\mu+2k}\frac{1}{k!\Gamma(k+\mu+1)}\/,\quad z>0\/,\quad \mu>-1\/.
}
It is well-known that whenever $z$ is real the function is a positive increasing real function. Moreover, by the differentiation formula (see \cite{Erdelyi:1953:volII} 7.11 (20))
\formula[eq:I:diff]{
   \dfrac{d}{dz}\left(\frac{I_\mu(z)}{z^\mu}\right) = \frac{I_{\mu+1}(z)}{z^\mu}\/,\quad z>0\/
}
and positivity of the right-hand side of (\ref{eq:I:diff}) we obtain that $z \to z^{-\mu}I_\mu(z)$ is  also increasing.

 The  asymptotic behavior of $I_\mu(z)$ at zero follows immediately from the series representation of $I_\mu(z)$
\formula[eq:I:asym:zero]{
  I_\mu(z) &= \left(\frac{z}{2}\right)^\mu \frac{1}{\Gamma(\mu+1)}+O(z^{\mu+2})\/,\quad z\to 0^{+}\/,
  }
  where the behaviour at infinity is given by (see \cite{Erdelyi:1953:volII} 7.13.1 (5))
\formula[eq:I:asym:infty]{
  I_\mu(z) &\sim \frac{e^z}{\sqrt{2\pi z}}\left(1+O(1/z)\right)\/,\quad z\to\infty\/.
}
Some parts of the proof strongly depends on the estimates of the ratio of two modified Bessel functions with different arguments. Here we recall the results of Laforgia given in Theorem 2.1 in \cite{Laforgia:1991}. For every $\mu>-1/2$ we have 
\formula[MBF:ineq:upper]{
\frac{I_\mu(y)}{I_\mu(x)}<\left(\frac{y}{x}\right)^{\mu}e^{y-x}\/,\quad y\geq x>0\/.
}
Moreover, whenever $\mu\geq1/2$, the lower bound of similar type holds, i.e. we have
\formula[MBF:ineq:lower]{
\frac{I_\mu(y)}{I_\mu(x)}\geq \left(\frac{x}{y}\right)^{\mu}e^{y-x}\/,\quad y\geq x>0\/.
}

\subsection{Bessel processes} 
In this section we introduce basic properties of Bessel processes. We follow the notation presented in \cite{MatsumotoYor:2005a} and \cite{MatsumotoYor:2005b}, where we refer the reader for more details. 

We write $\pr_x^{(\mu)}$ and $\ex^{(\mu)}_x$ for the probability law and the corresponding expected value of a Bessel process $R_t^{(\mu)}$ with an index $\mu\in\R$ on the canonical path space with starting point $R_0=x>0$. The filtration of the coordinate process is denoted by $\mathcal{F}_t^{(\mu)}=\sigma\{R_s^{(\mu)}:s\leq t\}$. The laws of Bessel processes with different indices are absolutely continuous and the corresponding Radon-Nikodym derivative is described by
\formula[ac:formula]{
\left.\frac{d\pr^{(\mu)}_x}{d\pr^{(\nu)}_x}\right|_{\mathcal{F}_t}=\left(\frac{w(t)}{x}\right)^{\mu-\nu}\exp\left(-\frac{\mu^{2}-\nu^2}{2}\int_{0}^{t}\frac{ds}{w^{2}(s)}\right)\/,
}
where $x>0$, $\mu,\nu\in\R$ and the above given formula holds $\pr^{(\nu)}_x$-a.s on $\{T_0^{(\nu)}>t\}$. Here $T_0^{(\mu)}$ denotes the first hitting time of $0$ by $R_t^{(\mu)}$. The behaviour of $R_t^{(\mu)}$ at zero depends on $\mu$. Since we are interested in a Bessel process in a half-line $(a,\infty)$, for a given strictly positive $a$, the boundary condition at zero is irrelevant from our point of view. However, for completeness of the exposure we impose killing condition at zero for $-1<\mu<0$, i.e. in the situation when $0$ is non-singular. Then the density  of the transition probability (with respect to the Lebesgue measure) is given by (\ref{eq:transitiondensity:formula}). 

For $x> 0$ we define the first hitting of a given level $a>0$ by 
\formula{
   T_a^{(\mu)}  =\inf\{t>0: R_t^{(\mu)}=a\}\/.
}
Notice that for $\mu\geq 0$ we have $T_a^{(\mu)}<\infty$ a.s., but for $\mu<0$ the variable $T_a^{(\mu)}$ is infinite with positive probability.
We denote by $q_{x,a}^{(\mu)}(s)$ the density function of $T_a^{(\mu)}$. The sharp estimates of $q_{x,a}^{(\mu)}(s)$ were obtained in \cite{BMR3:2013}. We recall this result for $a=1$, which  implies the result for every $a>0$, due to the scaling property of Bessel processes.  More precisely, it was shown that for every $x>1$ and $t>0$ we have
  \formula[hittingtime:estimates]{
q_{x,1}^{(\mu)}(s) &\stackrel{\mu}{\approx}
(x-1)\left(1\wedge\frac{1}{x^{2\mu}}\right)\frac{
{e^{-(x-1)^2/(2t)}}}{t^{3/2}} \frac{ x^{2|\mu|-1} }{t^{|\mu|-1/2}+
x^{|\mu|-1/2}}\/,\quad \mu\neq 0\/.
  }
The above-given bounds imply the description of the survival probabilities (see Theorem 10 in \cite{BMR3:2013} )
        \formula[sp:estimate:mu]{
           \textbf{P}_x^{(\mu)}(T^{(\mu)}_1>t) &\stackrel{\mu}{\approx} \frac{x-1}{\sqrt{x\wedge t}+x-1}\frac{1}{t^{\mu}+x^{2\mu}}\/, \quad x>1\/,\quad t>0\/.
        }

The main object of our study is the density of the transitions probabilities for the Bessel process starting from $x>a$ killed at time $T_a^{(\mu)}$. Taking into account the Hunt formula (\ref{eq:hunt:general}) and the fact that continuity of the paths implies $R_{T_a^{(\mu)}}^{(\mu)}=a$ a.s., we can represent $p_a^{(\mu)}(t,x,y)$ in terms of $p^{(\mu)}(t,x,y)$ and $q_{x,a}^{(\mu)}(s)$ in the following way
\formula[eq:hunt:formula]{
p_a^{(\mu)}(t,x,y) & = p^{(\mu)}(t,x,y) - r_a^{(\mu)}(t,x,y)\\
\label{eq:hunt:formula2}
    &= p^{(\mu)}(t,x,y)- \int_0^t p^{(\mu)}(t-s,a,y)q_{x,a}^{(\mu)}(s)ds\/.
}
The scaling property of a Bessel process together with (\ref{eq:hunt:formula2}) imply that
\formula[eq:pt1:scaling]{
  p_a^{(\mu)}(t,x,y) = \frac{1}{a}p_1^{(\mu)}(t/a^2,x/a,y/a)\/,\quad x,y>a\/,\quad t>0\/.
}
Moreover, the absolute continuity property (\ref{ac:formula}) applied for $\mu>0$ and $\nu=-\mu$ gives
\formula{
   p_1^{(-\mu)}(t,x,y) = \left(\frac{x}{y}\right)^{2\mu}p_1^{(\mu)}(t,x,y)\/,\quad x,y>1\/,\quad t>0\/.
}
These two properties show that it is enough to prove Theorem \ref{thm:main} only for $a=1$ and $\mu>0$. To shorten the notation we will write $q_x^{(\mu)}(s)=q_{x,1}^{(\mu)}(s)$. Since we consider the densities with respect to the Lebesgue measure (not with respect to the speed measure $m(dx)=2x^{2\mu+1}dx$) the symmetry property of $p_1^{(\mu)}(t,x,y)$ in this case reads as follows:
\formula[eq:pt1:symmetry]{
   p_1^{(\mu)}(t,x,y) = \left(\frac{y}{x}\right)^{2\mu+1}p_1^{(\mu)}(t,y,x)\/,\quad x,y>1\/,\quad t>0\/.
}
Finally, for $\mu=1/2$ one can compute $p_1^{(\mu)}(t,x,y)$ explicitly from (\ref{eq:hunt:formula2}), by using $I_{1/2}(z)=\sqrt{\frac{2}{{\pi z}}}\sinh(z)$ and the fact that $q_{x}^{(1/2)}(s)$ is a density of $1/2$-stable subordinator. More precisely, since
\formula[formula:pt:12:1]{
q_{x}^{(1/2)}(t)&=\frac{x-1}{x}\frac{1}{\sqrt{2\pi t^3}}\exp{\left(-\frac{(x-1)^2}{2t}\right)},\\
\label{formula:pt:12:2}
p^{(1/2)}(t,x,y)&=\frac{1}{\sqrt{2\pi t}}\frac{y}{x} \left(\exp{\left(-\frac{(x-y)^2}{2t}\right)}-\exp{\left(-\frac{(x+y)^2}{2t}\right)}\right) ,
}
we obtain
\formula{r_{1}^{(1/2)}(t,x,y)=&\int_{0}^{t}q_{x}^{(1/2)}(s)p^{(1/2)}(t-s,1,y)ds\\
=&\frac{x-1}{x}\frac{y}{2\pi}(H(t,(x-1)^2,(y-1)^2)-H(t,(x-1)^2,(y+1)^2)),}
where
\formula{
H(t,a,b)=\int_{0}^{t}\frac{1}{\sqrt{t-s}}\frac{1}{\sqrt{s^3}}\exp{\left(-\frac{a}{2s}\right)}\exp{\left(-\frac{b}{2(t-s)}\right)}ds\/,\quad a,b>0\/.
}
Making the  substitution $w=1/s-1/t$ and using formula 3.471.15 in \cite{GradsteinRyzhik:2007}
we get
\formula[eq:H:final]{
\nonumber
H(t,a,b)&=\frac{1}{\sqrt{t}}\exp{\left(-\frac{a + b}{2t}\right)}\int_{0}^{\infty}w^{-1/2}\exp{\left(-\frac{a}{2}w-\frac{b}{2w}\right)}dw\\
&=\sqrt{\frac{2\pi}{ta}}
\exp{\left(-\frac{(\sqrt{a}+\sqrt{b})^2}{2t}\right)}\/.
}
Hence we have
\formula[eq:rt:12:formula]{
r_{1}^{(1/2)}(t,x,y)=\frac{1}{\sqrt{2\pi t}}\frac{y}{x}\left[\exp{\left(-\frac{(x+y-2)^2}{2t}\right)}-\exp{\left(-\frac{(x+y)^2}{2t}\right)}\right]
}
which together with (\ref{eq:hunt:formula2}) and (\ref{formula:pt:12:2}) give
\formula[eq:pt1:12:formula]
{ p_{1}^{(1/2)}(t,x,y)=\frac{1}{\sqrt{2\pi t}} \frac{y}{x}\left(\exp\left(-\frac{(x-y)^2}{2t}\right)-\exp\left(-\frac{(x+y-2)^2}{2t}\right)\right).
}
One can also obtain this formula using the relation between $3$-dimensional Bessel process (i.e. with index $\mu=1/2$) and $1$-dimensional Brownian motion killed when leaving a positive half-line. Note also that 
\formula[eq:pt1:12:asympt]{
   p_1^{(1/2)}(t,x,y) \approx  \left(1\wedge\frac{(x-1)(y-1)}{t}\right) \frac{y}{x} \frac{1}{\sqrt{ t}} \exp\left(-\frac{(x-y)^2}{2t}\right)\/.
}
which is exactly (\ref{eq:mainthm}) for $\mu=1/2$.

We end this section providing very useful relation between densities $q_x^{(\mu)}(t)$ with different indices, which once again follows from the absolute continuity property.
\begin{lemma}
\label{lem:q:nu12mu}
For every $x>1$ and $t>0$ we have
\formula[eq:q:nu12mu]{
  x^{\mu-1/2}q_x^{(\mu)}(t)\leq q_x^{(1/2)}(t)\leq x^{\nu-1/2} q_x^{(\nu)}(t)\/,
}
whenever $\nu\leq 1/2\leq \mu$.
\end{lemma}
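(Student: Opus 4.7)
My plan is to deduce both inequalities from a single identity obtained by applying the absolute continuity formula (\ref{ac:formula}) at the stopping time $T_1$ rather than at a deterministic time $t$. Since a path of $R^{(\nu)}$ starts from $x>1$ and is continuous, it stays strictly above $1$ throughout $[0,T_1)$, which bounds the additive functional $\int_0^{t\wedge T_1}R_s^{-2}\,ds$ by $t$ and makes the Radon--Nikodym density in (\ref{ac:formula}) bounded on the stopped filtration. The optional stopping theorem then legitimises evaluating (\ref{ac:formula}) at $T_1\wedge t$; letting $t\to\infty$ and using $R_{T_1}=1$ yields, for every nonnegative Borel $f$,
\formula{
\ex_x^{(\mu)}[f(T_1);\,T_1<\infty]=x^{\nu-\mu}\,\ex_x^{(\nu)}\!\left[\exp\!\left(-\frac{\mu^2-\nu^2}{2}\int_0^{T_1}\frac{ds}{R_s^2}\right)f(T_1);\,T_1<\infty\right].
}

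I next convert this into a pointwise density identity by disintegrating with respect to the conditional distribution of $T_1$ under $\pr_x^{(\nu)}$, obtaining
\formula{
q_x^{(\mu)}(t)=x^{\nu-\mu}\,q_x^{(\nu)}(t)\,\ex_x^{(\nu)}\!\left[\exp\!\left(-\frac{\mu^2-\nu^2}{2}\int_0^{T_1}\frac{ds}{R_s^2}\right)\,\Big|\,T_1=t\right]
}
for (almost) every $t>0$. Since $\int_0^{T_1}R_s^{-2}\,ds\geq 0$, the conditional expectation on the right is bounded above by $1$ precisely when $\mu^2\geq\nu^2$. From here the two required inequalities follow by specialisation of the pair $(\mu,\nu)$: choosing $(\mu,1/2)$ with $\mu\geq 1/2$ gives $\mu^2-1/4\geq 0$ and hence $q_x^{(\mu)}(t)\leq x^{1/2-\mu}q_x^{(1/2)}(t)$, which is the left inequality; choosing $(1/2,\nu)$ with $\nu\leq 1/2$ (in the range where $1/4-\nu^2\geq 0$) gives $q_x^{(1/2)}(t)\leq x^{\nu-1/2}q_x^{(\nu)}(t)$, the right inequality.

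The only real technical obstacle is the passage from the stopping-time identity to the pointwise density identity, which requires a regular conditional distribution of $T_1$ together with continuity of $t\mapsto q_x^{(\nu)}(t)$. Both are standard for one-dimensional diffusions, and the known sharp estimates (\ref{hittingtime:estimates}) already guarantee the continuous version of the density, so no real difficulty arises beyond careful bookkeeping of the disintegration. The entire argument is otherwise a one-line consequence of Girsanov applied at $T_1$ combined with monotonicity of $x\mapsto e^{-x}$.
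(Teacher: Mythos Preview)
Your approach differs from the paper's and is arguably cleaner. The paper does not prove the right-hand inequality at all---it simply cites Lemma~4 of \cite{BMR3:2013}---and for the left-hand inequality it works at a fixed time: it bounds $x^{\mu-1/2}\pr_x^{(\mu)}(t-\varepsilon\leq T_1\leq t)$ by applying (\ref{ac:formula}) on $\mathcal F_t$, splits the resulting expectation $\ex_x^{(1/2)}[(R_t)^{\mu-1/2};\,t-\varepsilon\leq T_1\leq t]$ according to whether $R_t\leq 1+\delta$ or not, shows the tail piece is $o(\varepsilon)$ via a direct Gaussian estimate, divides by $\varepsilon$, and finally sends $\delta\to 0$. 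You instead evaluate the change of measure at $T_1$ itself, obtaining a single pointwise identity from which both inequalities drop out by monotonicity of the exponential. This unifies the two cases and bypasses the differentiation-in-$t$ limiting argument entirely.

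Two technical remarks. First, your justification of optional stopping is not quite right: the stopped density $M_{t\wedge T_1}=(R_{t\wedge T_1}/x)^{\mu-\nu}\exp(\cdots)$ is \emph{not} bounded, since $R_t$ is unbounded above on $\{T_1>t\}$ and $\mu-\nu>0$. The correct reason is that $M$ is a true martingale under $\pr_x^{(\nu)}$ for $\nu\geq 0$ (immediate from (\ref{ac:formula}), since $\ex_x^{(\nu)}[M_t]=\pr_x^{(\mu)}(\Omega)=1$), so optional sampling at $T_1\wedge t$ is legitimate; alternatively, apply (\ref{ac:formula}) directly to the $\mathcal F_t$-measurable functional $f(T_1)\ind_{\{T_1\leq t\}}$ and condition on $\mathcal F_{T_1}$ via the strong Markov property. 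Second, your parenthetical restriction $1/4-\nu^2\geq 0$ is not optional: for $\nu<-1/2$ the symmetry $q_x^{(\nu)}(t)=x^{-2\nu}q_x^{(-\nu)}(t)$ (which follows from (\ref{ac:formula}) with indices $\pm\nu$) shows the right-hand inequality would combine with the left to force equality, which is false. Since every application in the paper has $0<\nu\leq 1/2$, this is harmless, but the restriction should be stated rather than tucked into parentheses.
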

\begin{proof}
  The second inequality in (\ref{eq:q:nu12mu}) was given in Lemma 4 in \cite{BMR3:2013}. To deal with the right-hand side of (\ref{eq:q:nu12mu}) we use (\ref{ac:formula}) to obtain for every  $\delta>0$ and $0<\varepsilon \leq\delta^2/2\wedge 1$ 
  \formula[eq:lem:proof01]{
  \nonumber
    x^{\mu-1/2}\ex_x^{(\mu)}[t-\varepsilon\leq T_1^{(\mu)}\leq t] &\leq  \ex_x^{(1/2)}\left[t-\varepsilon\leq T_1^{(1/2)}\leq t;\left({R_t}\right)^{\mu-1/2}\right]  \\
    &\leq  \left({1+\delta}\right)^{\mu-1/2}\ex_x^{(1/2)}[t-\varepsilon\leq T_1^{(1/2)}\leq t]+F_\varepsilon(x,t)\/,
    }
    where, by Strong Markov property 
    \formula{
    F_\varepsilon(x,t) &= \ex_x^{(1/2)}[t-\varepsilon\leq T_1^{(1/2)}\leq t,R_t\geq 1+\delta;\left({R_t}\right)^{\mu-1/2}]\\
    &=\ex_x^{(1/2)}[t-\varepsilon\leq T_1^{(1/2)}\leq t; \ex_1^{(1/2)}[R_{t-T_1^{(1/2)}}\geq 1+\delta;\left({R_{t-T_1^{(1/2)}}}\right)^{\mu-1/2}]]\\
    &=\int_{t-\varepsilon}^t q_x^{(1/2)}(u)\int_{1+\delta}^\infty y^{\mu-1/2}p^{(1/2)}(t-u,1,y)\,dydu\/.
  }
  By (\ref{formula:pt:12:2}), for every $r\in (0,\varepsilon)$ we have
  \formula{
     \int_{1+\delta}^\infty y^{\mu-1/2}p^{(1/2)}(r,1,y)\,dy &\leq \frac{1}{\sqrt{2\pi r}}\int_{1+\delta}^\infty \exp\left(-\frac{(y-1)^2}{2r}\right)y^{\mu+1/2}\,dy\\
     &\leq \frac{1}{\sqrt{2\pi r}}\exp\left(-\frac{\delta^2}{4r}\right)\int_{1+\delta}^\infty \exp\left(-\frac{(y-1)^2}{4}\right)y^{\mu+1/2}\,dy\\
     &\leq \frac{1}{\sqrt{2\pi \varepsilon}}\exp\left(-\frac{\delta^2}{4\varepsilon}\right)\int_{1+\delta}^\infty \exp\left(-\frac{(y-1)^2}{4}\right)y^{\mu+1/2}\,dy\/,
  }
  where the last inequality follows from $\varepsilon\leq \delta^2/2$. It implies that $F_\varepsilon(t,x)/\varepsilon$ vanishes when $\varepsilon$ goes to zero. Consequently, dividing both sides of (\ref{eq:lem:proof01}) by $\varepsilon$ and taking a limit when $\varepsilon \to 0$, we arrive at
  \formula{
     x^{\mu-1/2}q_x^{(\mu)}(t)\leq ({1+\delta})^{\mu-1/2}q_x^{(1/2)}(t)\/.
  }
  Since $\delta$ was arbitrary, the proof is complete.
\end{proof}



\section{Estimates for $xy/t$ large}
\label{section:xyt:large}

We begin this Section with the application of the absolute continuity property of Bessel processes and the formula (\ref{eq:pt1:12:formula}) which give the upper bounds for $\mu\geq 1/2$ and lower bounds for $\nu\leq 1/2$. These bounds are sharp whenever $xy\geq t$.
\begin{proposition}
\label{prop:upperbounds:xytlarge}
   Let $\mu\geq 1/2\geq \nu>0$. For every $x,y>1$ and $t>0$ we have
   \formula[eq:up:low]{
       \left(\frac{x}{y}\right)^{\mu-\frac{1}{2}}p_1^{(\mu)}(t,x,y)\leq p_1^{(1/2)}(t,x,y)\leq \left(\frac{x}{y}\right)^{\nu-\frac{1}{2}}p_1^{(\nu)}(t,x,y)\/.
   }
\end{proposition}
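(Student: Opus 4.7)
The plan is to prove both inequalities simultaneously by establishing a single monotonicity relation: for any $\mu \geq \nu > 0$ and $x,y > 1$,
\formula{
p_1^{(\mu)}(t,x,y) \leq \left(\frac{y}{x}\right)^{\mu-\nu} p_1^{(\nu)}(t,x,y).
}
Then applying this with $(\mu,\nu) = (\mu, 1/2)$ gives the left inequality of (\ref{eq:up:low}), and applying it with $(\mu,\nu) = (1/2, \nu)$ gives the right inequality.

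To get the key monotonicity I would use the absolute continuity formula (\ref{ac:formula}). For $x > 1$ and any nonnegative Borel $f$, the event $\{T_1^{(\cdot)} > t\} \cap \{R_t \in dy\}$ lies in $\mathcal{F}_t$ and is automatically contained in $\{T_0^{(\cdot)} > t\}$, so (\ref{ac:formula}) gives
\formula{
\ex_x^{(\mu)}[f(R_t); T_1^{(\mu)} > t] = \ex_x^{(\nu)}\!\left[\left(\frac{R_t}{x}\right)^{\mu-\nu}\exp\!\left(-\frac{\mu^2-\nu^2}{2}\int_0^t \frac{ds}{R_s^2}\right) f(R_t); T_1^{(\nu)} > t\right].
}

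The crucial observation is that on $\{T_1^{(\nu)} > t\}$ the path stays strictly above $1$, so $\int_0^t ds/R_s^2$ is a nonnegative finite quantity, and since we assume $\mu \geq \nu > 0$ the factor $\mu^2 - \nu^2 \geq 0$, making the exponential factor lie in $(0,1]$. Dropping it yields the inequality
\formula{
\ex_x^{(\mu)}[f(R_t); T_1^{(\mu)} > t] \leq x^{\nu-\mu}\,\ex_x^{(\nu)}[R_t^{\mu-\nu} f(R_t); T_1^{(\nu)} > t],
}
which, read through the definition of $p_1^{(\mu)}$ and $p_1^{(\nu)}$ as densities of the respective sub-probabilities, translates into exactly the claimed pointwise bound on densities.

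There is no real obstacle here beyond a small measurability check: one must verify that (\ref{ac:formula}), whose statement concerns $\{T_0^{(\nu)} > t\}$, may be applied to the subset $\{T_1^{(\nu)} > t\}$, and then identify the resulting integral against the bridge distribution with the killed density $p_1^{(\nu)}$. Both are routine since continuity of paths and $x > 1$ guarantee $T_1^{(\nu)} \leq T_0^{(\nu)}$ almost surely, so the Radon--Nikodym identity is valid on the required event.
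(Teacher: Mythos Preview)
Your argument is correct and is essentially identical to the paper's own proof: both establish the general comparison $p_1^{(\mu)}(t,x,y)\leq (y/x)^{\mu-\nu}p_1^{(\nu)}(t,x,y)$ for $\mu\geq\nu>0$ by applying the absolute continuity relation (\ref{ac:formula}) on $\{T_1>t\}$ and bounding the exponential factor by $1$, then specialize to $\nu=1/2$ and $\mu=1/2$ respectively. Your additional remark that $\{T_1>t\}\subset\{T_0>t\}$ justifies the use of (\ref{ac:formula}) is a point the paper leaves implicit.
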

\begin{proof}
From the absolute continuity property (\ref{ac:formula}) we get that for every $\mu\geq\nu>0$ and every Borel set $A\subset(1,\infty)$ we have
\formula{
\int_{A}p_{1}^{(\mu)}(t,x,y)dy &=\frac{1}{x^{\mu-\nu}}\ex_{x}^{(\nu)}\left[T_1^{(\nu)}>t,R_t \in A;
(R_t)^{\mu-\nu}\exp\left(-\frac{\mu^{2}-\nu^2}{2}\int_{0}^{t}\frac{ds}{R^{2}_s}\right)\right]\\
&\leq\frac{1}{x^{\mu -\nu}}\ex_{x}^{(\nu)}[T_1^{(\nu)}>t,R_t \in
A;(R_t)^{\mu-\nu}]= \int_A \left(\frac{y}{x}\right)^{\mu -\nu}p_1^{(\nu)}(t,x,y)\,dy\/.}
Hence
\formula[eq:munu:relation]{
p_{1}^{(\mu)}(t,x,y)\leq&\left(\frac{y}{x}\right)^{\mu-\nu}p_{1}^{(\nu)}(t,x,y)\/.
}
Taking $\mu\geq 1/2$ and $\nu=1/2$ gives the left-hand side of (\ref{eq:up:low}) and taking $\nu\leq 1/2$ and $\mu=1/2$ gives the right-hand side of (\ref{eq:up:low}).
\end{proof}
The absolute continuity can also be used to show the estimates for small times $t$ in a very similar way. Note that if $t<1$ then we always have $xy>t$. The proof of the main Theorem will be provided in subsequent propositions without the assumption that $t$ is bounded, but we present this simple proof to show that for $xy\geq t$ the estimates for small $t$ are just an immediate consequence of the absolute continuity of Bessel processes.
\begin{proposition}
   Let $\mu >0$. For every $x,y> 1$ and $t\in(0,1]$ we have
   \formula[eq:t:small]{
   p_{1}^{(\mu)}(t,x,y)\stackrel{\mu}{\approx}& \left(1\wedge\frac{(x-1)(y-1)}{t}\right)\left(\frac{y}{x}\right)^{\mu+1/2} \frac{1}{\sqrt{t}}\exp\left(-\frac{(x-y)^2}{2t}\right) \/.
   }
\end{proposition}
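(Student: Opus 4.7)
The strategy is to reduce the case of a general $\mu>0$ to the explicit case $\mu=1/2$, exploiting the restriction $t\leq 1$ to upgrade the one-sided comparison of Proposition \ref{prop:upperbounds:xytlarge} to a two-sided one, and then to insert the closed-form asymptotic (\ref{eq:pt1:12:asympt}).

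First I would apply the Radon--Nikodym formula (\ref{ac:formula}) for a general pair of indices $\mu>\nu>0$, exactly as in the proof of Proposition \ref{prop:upperbounds:xytlarge}, to write
\formula{
\int_A p_1^{(\mu)}(t,x,y)\,dy=\frac{1}{x^{\mu-\nu}}\ex_{x}^{(\nu)}\!\left[T_1^{(\nu)}>t,\,R_t\in A;\,(R_t)^{\mu-\nu}\exp\!\left(-\frac{\mu^{2}-\nu^2}{2}\int_{0}^{t}\frac{ds}{R^{2}_s}\right)\right].
}
The crux is the observation that on $\{T_1^{(\nu)}>t\}$ we have $R_s\geq 1$ for every $s\leq t$, so $0\leq \int_0^t ds/R_s^2\leq t\leq 1$; hence the exponential factor is pinched between $e^{-(\mu^2-\nu^2)/2}$ and $1$. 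Disintegrating over $A$ we obtain the two-sided comparison
\formula{
p_1^{(\mu)}(t,x,y)\stackrel{\mu}{\approx}\left(\frac{y}{x}\right)^{\mu-\nu}p_1^{(\nu)}(t,x,y),\qquad t\in(0,1],\ x,y>1.
}

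Next I would specialise this: take $\nu=1/2$ when $\mu\geq 1/2$, and apply the same argument with the roles of $\mu,\nu$ swapped (so that the larger index is $1/2$) when $0<\mu<1/2$, then invert. Either way one arrives at
\formula{
p_1^{(\mu)}(t,x,y)\stackrel{\mu}{\approx}\left(\frac{y}{x}\right)^{\mu-1/2}p_1^{(1/2)}(t,x,y),\qquad t\in(0,1].
}
Finally, substituting the explicit asymptotic (\ref{eq:pt1:12:asympt}), the factor $(y/x)^{\mu-1/2}$ combines with the $y/x$ coming from $p_1^{(1/2)}$ to produce $(y/x)^{\mu+1/2}$, exactly matching the right-hand side of (\ref{eq:t:small}).

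There is no genuine difficulty in this argument; the entire proof rests on the elementary fact that survival in $(1,\infty)$ forces $R_s\geq 1$ and hence keeps the Girsanov compensator $\int_0^t ds/R_s^2$ bounded by $t\leq 1$. This is precisely why the restriction $t\leq 1$ (equivalently $xy>t$, since $x,y>1$) is essential: for large $t$ this compensator is no longer uniformly bounded and the comparison between indices degenerates, which is the obstruction handled by the more delicate arguments of the subsequent propositions.
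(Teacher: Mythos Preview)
Your proof is correct and follows essentially the same approach as the paper: both use the Radon--Nikodym formula (\ref{ac:formula}), the key observation that $R_s\geq 1$ on $\{T_1^{(\nu)}>t\}$ forces $\int_0^t ds/R_s^2\leq t\leq 1$, and then specialise to $\nu=1/2$ (or swap roles when $\mu<1/2$) before invoking (\ref{eq:pt1:12:asympt}). The only cosmetic difference is that you pinch the exponential factor to get both bounds at once, whereas the paper derives the lower bound here and cites Proposition~\ref{prop:upperbounds:xytlarge} for the upper bound.
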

\begin{proof}
Let $\mu\geq \nu>0$. Taking Borel set $A\subset(1,\infty)$ and $t\leq 1$ we have
\formula{\int_{A}p_{1}^{(\mu)}(t;x,y)dy
=&\frac{1}{x^{\mu-\nu}}\ex_{x}^{(\nu)}\left[T_1^{(\nu)}>t;R_t \in A;
(R_t)^{\mu-\nu}\exp\left(-\frac{\mu^{2}-\nu^2}{2}\int_{0}^{t}\frac{ds}{R^{2}_s}\right)\right]\/.
}
Since $\inf\{R_s:s<t\}>1$ on $\{T_1^{(\nu)}>t\}$ we can write
\formula{
\int_{A}p_{1}^{(\mu)}(t,x,y)dy
\geq&\frac{1}{x^{\mu-\nu}}\ex_{x}^{(\nu)}\left[T_1^{(\nu)}>t;R_t \in A;
(R_t)^{\mu-\nu}\exp\left(-\frac{\mu^{2}-\nu^2}{2}t\right)\right]\\
\geq&\exp\left(-\frac{\mu^{2}-\nu^2}{2}\right)\int_{A}\left(\frac{y}{x}\right)^{\mu-\nu}p_{1}^{(\nu)}(t,x,y)dy\/.}
Hence we get
\formula{p_{1}^{(\mu)}(t,x,y)\geq&\exp\left(-\frac{\mu^{2}-\nu^2}{2}\right)\left(\frac{y}{x}\right)^{\mu-\nu}p_{1}^{(\nu)}(t,x,y)\/.
}
Now taking $\mu\geq 1/2$ and $\nu=1/2$ together with (\ref{eq:pt1:12:formula}) and the result of Proposition \ref{prop:upperbounds:xytlarge} gives the proof of (\ref{eq:t:small}) for $\mu\geq 1/2$. Analogous argument applied for $\mu<1/2$ ends the proof.
\end{proof}
Next proposition together with Proposition \ref{prop:upperbounds:xytlarge} provide the estimates for $x,y$ bounded away from $1$. Notice that if $x,y>c>1$ and $xy>t$ then 
\formula[eq:xy:away:bounds1]{
    \frac{(x-1)(y-1)}{t}\geq  \left(1-\frac{1}{c}\right)^2 \frac{xy}{t} \geq  \left(1-\frac{1}{c}\right)^2 \/.
}
and consequently the right-hand side of (\ref{eq:mainthm:rewrite}) is comparable with a constant which means that $p_1^{(\mu)}(t,x,y)$ is comparable with $p^{(\mu)}(t,x,y)$.
\begin{proposition}
\label{prop:xy:away}
Let $\mu\geq 1/2\geq \nu>0$. Then there exist constants $C_1^{(\nu)},C_2^{(\mu)}>0$ and $C_3^{(\mu)}>1$ such that 
\formula{
   C_1^{(\nu)}\left(\frac{x}{y}\right)^{\nu+1/2}p_1^{(\nu)}(t,x,y)\leq  \frac{1}{\sqrt{t}}\exp\left(-\frac{(x-y)^2}{t}\right)\leq  C_2^{(\mu)}\left(\frac{x}{y}\right)^{\mu+1/2}p_1^{(\mu)}(t,x,y)\/, 
}
whenever $xy\geq t$ and the lower bounds holds for $x,y>2$ and the upper bounds are valid for $x,y>C_3^{(\mu)}$.
\end{proposition}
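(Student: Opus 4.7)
The plan splits along the two inequalities, whose proofs are of quite different natures. For the left inequality (upper bound on $p_1^{(\nu)}$) I would simply combine the trivial domination $p_1^{(\nu)}(t,x,y)\leq p^{(\nu)}(t,x,y)$ with the asymptotic (\ref{eq:I:asym:infty}). Since $xy/t\geq 1$ in this regime, one may bound $I_\nu(xy/t)\leq c_\nu\,e^{xy/t}/\sqrt{xy/t}$, and inserting this into (\ref{eq:transitiondensity:formula}) directly yields $p^{(\nu)}(t,x,y)\leq c\,(y/x)^{\nu+1/2}t^{-1/2}\exp(-(x-y)^2/(2t))$, which is already stronger than the required inequality (the hypothesis $x,y>2$ plays no role here beyond comparability of $(y/x)^{\nu-1/2}$ with a constant).

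The right inequality (lower bound on $p_1^{(\mu)}$) is the main content. The plan is to use the Hunt formula (\ref{eq:hunt:formula}) and show that for $x,y>C_3^{(\mu)}$ large enough the subtracted term satisfies
\[
r_1^{(\mu)}(t,x,y)\leq \tfrac12 p^{(\mu)}(t,x,y),
\]
so that $p_1^{(\mu)}\geq \tfrac12 p^{(\mu)}$ and another application of (\ref{eq:I:asym:infty}) yields the claimed lower bound in the regime $xy\geq t$. To estimate $r_1^{(\mu)}$ I would insert the sharp bound (\ref{hittingtime:estimates}) for $q_x^{(\mu)}(s)$ and the explicit form (\ref{eq:transitiondensity:formula}) of $p^{(\mu)}(t-s,1,y)$, replacing the Bessel function by the asymptotics (\ref{eq:I:asym:zero})--(\ref{eq:I:asym:infty}) after splitting the $s$-integration according to whether $y/(t-s)\geq 1$ or not. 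In the main regime the integrand carries the exponential $\exp(-(x-1)^2/(2s)-(y-1)^2/(2(t-s)))$, whose minimum over $s$ is attained at $s_\ast=t(x-1)/(x+y-2)$ and equals $(x+y-2)^2/(2t)$; a Laplace-method argument over $s\in(0,t)$ shows that the full integral is dominated by a polynomial prefactor (depending only on $\mu$) times $\exp(-(x+y-2)^2/(2t))$.

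The final ingredient is the algebraic identity
\[
(x+y-2)^2-(x-y)^2=4(x-1)(y-1),
\]
which converts the previous bound into $r_1^{(\mu)}(t,x,y)\leq C^{(\mu)}\exp(-2(x-1)(y-1)/t)\,p^{(\mu)}(t,x,y)$. By (\ref{eq:xy:away:bounds1}), for $xy\geq t$ and $x,y>C_3^{(\mu)}$ one has $(x-1)(y-1)/t\geq (1-1/C_3^{(\mu)})^2$, and choosing $C_3^{(\mu)}$ large enough so that $C^{(\mu)}\exp(-2(1-1/C_3^{(\mu)})^2)\leq \tfrac12$ closes the argument. The main obstacle is producing the uniform polynomial constant $C^{(\mu)}$: one must split the range $(0,t)$ at the saddle point $s_\ast$ and at the point where the asymptotic regime of $I_\mu$ in $p^{(\mu)}(t-s,1,y)$ switches (roughly $y\asymp t-s$), and then check that each piece contributes only a polynomial factor bounded uniformly in $(t,x,y)$ by a constant depending solely on $\mu$. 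Once this bookkeeping is done, the hypothesis $xy\geq t$ converts the largeness of $x$ and $y$ into the desired exponential decay of $r_1^{(\mu)}/p^{(\mu)}$.
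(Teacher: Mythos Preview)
Your treatment of the left inequality matches the paper's. For the right inequality, the overall architecture (show $r_1^{(\mu)}\leq\tfrac12\,p^{(\mu)}$ via the Hunt formula, then apply (\ref{eq:I:asym:infty})) is also the paper's. But your closing step contains a genuine error: you claim one can choose $C_3^{(\mu)}$ large enough so that $C^{(\mu)}\exp\bigl(-2(1-1/C_3^{(\mu)})^2\bigr)\leq\tfrac12$. As $C_3^{(\mu)}\to\infty$ this expression tends to $C^{(\mu)}e^{-2}$, not to $0$; so unless you can guarantee $C^{(\mu)}<e^2/2\approx 3.69$, the step fails. Since your $C^{(\mu)}$ arises from the unspecified comparison constants in (\ref{hittingtime:estimates}) compounded with a saddle-point estimate that requires further partitions of the integration range, there is no reason to expect such a bound. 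The hypothesis $xy\geq t$ gives only $(x-1)(y-1)/t\geq(1-1/C_3^{(\mu)})^2$, which stays below $1$ however large $C_3^{(\mu)}$ is; the exponential decay you invoke is simply not available.

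The paper circumvents this by a different mechanism. Rather than estimating the convolution directly, it uses Lemma~\ref{lem:q:nu12mu} and (\ref{eq:munu:relation}) to compare with index $1/2$: $q_x^{(\mu)}(s)\leq x^{-(\mu-1/2)}q_x^{(1/2)}(s)$ and $p^{(\mu)}(t-s,1,y)\leq y^{\mu-1/2}p^{(1/2)}(t-s,1,y)$, hence $r_1^{(\mu)}\leq(y/x)^{\mu-1/2}r_1^{(1/2)}$. The point is that $r_1^{(1/2)}$ is known \emph{exactly} by (\ref{eq:rt:12:formula}) and carries \emph{two} exponential terms. Combined with the explicit lower bound for $p^{(\mu)}$ one gets
\[
\frac{r_1^{(\mu)}(t,x,y)}{p^{(\mu)}(t,x,y)}\leq c_1\Bigl(e^{-2(x-1)(y-1)/t}-e^{-2xy/t}\Bigr)\leq c_1\bigl(e^{-c_2 z}-e^{-z}\bigr),
\]
with $z=2xy/t$ and $c_2=(1-1/C_3^{(\mu)})^2$. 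The retained subtraction is decisive: the elementary inequality $e^{-c_2 z}-e^{-z}\leq(1-c_2)/c_2$ does tend to $0$ as $C_3^{(\mu)}\to\infty$, and one chooses $C_3^{(\mu)}$ so that $c_1(1-c_2)/c_2=\tfrac12$. Your Laplace route throws away the second exponential, and it is precisely that term which makes the argument close regardless of the size of $c_1$.
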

\begin{proof}
Taking $0<\nu\leq 1/2$ and using the description of the behaviour of $I_\nu(z)$ at infinity \eqref{eq:I:asym:infty} together with general estimate $p_1^{(\nu)}(t,x,y)\leq p^{(\nu)}(t,x,y)$ (which is an immediate consequence of the definition (\ref{eq:hunt:general})) we get
\formula{
p_1^{(\nu)}(t,x,y)&\leq p^{(\nu)}(t,x,y)
\stackrel{\nu}{\approx}\frac{1}{\sqrt{t}}\left(\frac{y}{x}\right)^{\nu+1/2}\exp\left(-\frac{(x-y)^2}{t}\right)\/.
}
This ends the proof for small indices. 

Now let $\mu\geq 1/2$. Since the modified Bessel function $I_\mu(z)$ is positive, continuous and behaves like $(2\pi z)^{-1/2}e^{z}$ at infinity (see (\ref{eq:I:asym:infty})) there exists constant $c_1>1$ such that 
   \formula{
      I_\mu\left(\frac{xy}{t}\right)\geq \frac{1}{c_1}\sqrt{\frac{t}{2\pi xy}}\exp\left(\frac{xy}{t}\right)\/,
   }
   whenever $xy\geq t$. One can show that it is enough to take $c_1 = (I_\mu(1)e^{-1}\sqrt{2\pi})^{-1}$.
Consequently, applying above given estimate to (\ref{eq:transitiondensity:formula}) we arrive at
   \formula[eq:proof:1]{
      \left(\frac{y}{x}\right)^{\mu-1/2}p^{(1/2)}(t,x,y) \geq p^{(\mu)}(t,x,y)\geq \frac{1}{c_1}\frac{1}{\sqrt{2\pi t}}\left(\frac{y}{x}\right)^{\mu+1/2}\exp\left(-\frac{(x-y)^2}{2t}\right)\/,\quad {xy}\geq{t}\/,
   }
   where the first inequality is just (\ref{eq:munu:relation}). Moreover, by (\ref{eq:q:nu12mu}), we have
   \formula{
     q_x^{(\mu)}(t)\leq \frac{q_x^{(1/2)}(t)}{x^{\mu-1/2}} = \frac{x-1}{x^{\mu-1/2}}\frac{1}{\sqrt{2\pi}t^{3/2}}\exp\left(-\frac{(x-1)^2}{2t}\right)\/,\quad t>0,x>1\/.
   }
   and it together with left-hand side of (\ref{eq:proof:1}) and (\ref{eq:rt:12:formula}) imply
   \formula{
      {r_1^{(\mu)}(t,x,y)} &= \int_0^t q_x^{(\mu)}(s)p^{(\mu)}(t-s,1,y)\,ds
      \leq  \left(\frac{y}{x}\right)^{\mu-1/2} \int_0^t q_x^{(1/2)}(s)p^{(1/2)}(t-s,1,y)\,ds\\
      &= \left(\frac{y}{x}\right)^{\mu-1/2} r_1^{(1/2)}(t,x,y)\\
      & = \frac{1}{\sqrt{2\pi t}}\left(\frac{y}{x}\right)^{\mu+1/2}\left(\exp\left(-\frac{(x+y-2)^2}{2t}\right)-\exp\left(-\frac{(x+y)^2}{2t}\right)\right)\/.
   }
   Let $C_3^{(\mu)} = \left(1-\sqrt{\frac{2c_1}{2c_1+1}}\right)^{-1}$ and taking into account right-hand side of (\ref{eq:proof:1}) and (\ref{eq:xy:away:bounds1}) we obtain for $x,y>C_3^{(\mu)}$ that
   \formula{
     \frac{r_1^{(\mu)}(t,x,y)}{p^{(\mu)}(t,x,y)}&\leq c_1\exp\left(\frac{(x-y)^2}{2t}\right)\left(\exp\left(-\frac{(x+y-2)^2}{2t}\right)-\exp\left(-\frac{(x+y)^2}{2t}\right)\right)\\
     &=c_1\left(\exp\left(-\frac{2(x-1)(y-1)}{t}\right)-\exp\left(-\frac{2xy}{t}\right)\right)\\
     &\leq c_1\left(\exp\left(-c_2\frac{2xy}{t}\right)-\exp\left(-\frac{2xy}{t}\right)\right)\/,
   }
   where 
   \formula{
   c_2 = \left(1-\frac{1}{C_3^{(\mu)}}\right)^2 = \frac{2c_1}{2c_1+1}<1\/.
   }
   Taking into account the general estimate 
   \formula{
      e^{-c_2z}-e^{-z}\leq \frac{1-c_2}{c_2}\/,\quad z>0\/,c_2<1
   }
   we arrive at
   \formula{
   \frac{r_1^{(\mu)}(t,x,y)}{p^{(\mu)}(t,x,y)}&\leq c_1\frac{1-c_2}{c_2} = \frac{1}{2}\/.
   }
Consequently
\formula{
   p_1^{(\mu)}(t,x,y)\geq \frac12 p^{(\mu)}(t,x,y)\geq \frac{1}{2c_1} \frac{1}{\sqrt{2\pi t}}\left(\frac{y}{x}\right)^{\mu+1/2}\exp\left(-\frac{(x-y)^2}{2t}\right)\/.
}
\end{proof}

Now we turn our attention to the case when $x$ and $y$ are bounded. The next proposition, however, is much more general.
\begin{proposition}
\label{prop:xminy}
For fixed $m>0$ and $\mu\geq 1/2\geq \nu>0$ there exist constants $C^{(\mu)}_4,C^{(\nu)}_4>0$ such that 
\formula{
C^{(\mu)}_4\left(\frac{x}{y}\right)^{\mu+1/2}p_{1}^{(\mu)}(t,x,y)&\geq \left(1\wedge\frac{(x-1)(y-1)}{t}\right)\frac{1}{\sqrt{
t}}\exp\left(-\frac{(x-y)^{2}}{2t}\right)}
and
\formula{
\left(1\wedge\frac{(x-1)(y-1)}{t}\right)\frac{1}{\sqrt{
t}}\exp\left(-\frac{(x-y)^{2}}{2t}\right)\geq C^{(\nu)}_4\left(\frac{x}{y}\right)^{\nu+1/2}p_{1}^{(\nu)}(t,x,y)
}
whenever ${(x \wedge y)^{2}}\geq mt$.
\end{proposition}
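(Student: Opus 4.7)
By the sharp asymptotic \eqref{eq:pt1:12:asympt}, $p_{1}^{(1/2)}(t,x,y)$ is comparable (with absolute constants) to $(y/x)$ times the right-hand side of the proposition. Consequently, both inequalities reduce to the two-sided comparisons
\formula{
p_{1}^{(\mu)}(t,x,y) \stackrel{\mu}{\approx} (y/x)^{\mu-1/2}\, p_{1}^{(1/2)}(t,x,y),\quad p_{1}^{(\nu)}(t,x,y) \stackrel{\nu}{\approx} (y/x)^{\nu-1/2}\, p_{1}^{(1/2)}(t,x,y),
}
valid for $(x\wedge y)^{2}\geq mt$. Proposition~\ref{prop:upperbounds:xytlarge} already provides one direction of each; what is missing is the lower bound for $p_{1}^{(\mu)}$ and the upper bound for $p_{1}^{(\nu)}$.

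\textbf{Absolute continuity and Jensen.} Applying \eqref{ac:formula} with $(\mu,1/2)$ on the event $\{T_{1}^{(1/2)}>t\}\subset\{T_{0}^{(1/2)}>t\}$ and taking densities at $y$, one obtains
\formula{
p_{1}^{(\mu)}(t,x,y)=(y/x)^{\mu-1/2} p_{1}^{(1/2)}(t,x,y)\,\ex_{x}^{(1/2)}\!\left[e^{-(\mu^{2}-1/4)A_{t}/2}\,\big|\,T_{1}^{(1/2)}>t,R_{t}=y\right]
}
with $A_{t}=\int_{0}^{t}ds/R_{s}^{2}$; an analogous identity in the $(1/2,\nu)$ case expresses $p_{1}^{(1/2)}$ as $(y/x)^{1/2-\nu}p_{1}^{(\nu)}$ times a similar conditional expectation. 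Both exponents are non-positive, so these conditional expectations lie in $(0,1]$ (recovering Proposition~\ref{prop:upperbounds:xytlarge}), and the missing inequalities reduce to uniform \emph{lower} bounds on them. By Jensen's inequality, it suffices to prove
\formula{
(\star)\quad \ex_{x}^{(\lambda)}\!\left[A_{t}\,\big|\,T_{1}^{(\lambda)}>t,R_{t}=y\right]\leq C(m,\lambda),\quad (x\wedge y)^{2}\geq mt,\,\lambda\in\{1/2,\nu\}.
}

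\textbf{Main step $(\star)$ and main obstacle.} The pathwise bound $R_{s}\geq 1$ only gives $A_{t}\leq t$, which is insufficient when $t$ is large. Instead, I would use the scaling $\tilde R_{u}=R_{ut}/\sqrt{t}$, $u\in[0,1]$: then $\tilde R$ is a Bessel process of index $\lambda$ starting at $\tilde x=x/\sqrt{t}\geq \sqrt{m}$, with Dirichlet barrier $1/\sqrt{t}$ and terminal value $\tilde y=y/\sqrt{t}\geq \sqrt{m}$, and $A_{t}=\int_{0}^{1}du/\tilde R_{u}^{2}$. On the event $E=\{\inf_{u\leq 1}\tilde R_{u}\geq \sqrt{m}/2\}$, the integrand is pointwise at most $4/m$, contributing at most $4/m$ to $(\star)$. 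Off $E$, one applies the strong Markov property at $\tau=\inf\{u\leq 1:\tilde R_{u}=\sqrt{m}/2\}$ together with the sharp hitting-time estimate \eqref{hittingtime:estimates} and the asymptotic \eqref{eq:pt1:12:asympt} (and Proposition~\ref{prop:upperbounds:xytlarge} for $\lambda=\nu$) to show that the conditional contribution of such deep excursions is bounded uniformly in terms of $m$ and $\lambda$. Combining $(\star)$ with \eqref{eq:pt1:12:asympt} then yields both inequalities of the proposition. The difficult point is precisely this off-$E$ estimate: the Dirichlet barrier $1/\sqrt{t}$ can be arbitrarily small for large $t$, while the only effective control on the bridge is that its endpoints sit at height at least $\sqrt{m}$, so ruling out non-negligible time spent near the barrier requires genuine use of the killed-semigroup estimates \eqref{hittingtime:estimates} and \eqref{eq:pt1:12:asympt}.
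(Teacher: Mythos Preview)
Your reduction via absolute continuity and Jensen is sound, and $(\star)$ would indeed yield both inequalities. The gap is the off-$E$ contribution, which you flag but do not carry out, and your sketch is too optimistic there: on $E^{c}$ the only pathwise bound is $A_{t}\leq t$ (from $R_{s}>1$), while under the killed bridge the conditional probability of $E^{c}$ does \emph{not} decay in $t$. In scaled coordinates it converges, as $t\to\infty$, to the probability that a unit-time Bessel$(\lambda)$ bridge between two points at height $\geq\sqrt{m}$ dips below $\sqrt{m}/2$, which is a fixed positive number. Hence the crude product bound on $E^{c}$ gives only $O(t)$, and to rescue $(\star)$ you would need a genuine layer-cake control on the time the killed bridge spends at each low level; this is not delivered by \eqref{hittingtime:estimates} or \eqref{eq:pt1:12:asympt} alone, and the strong-Markov splitting at $\tau$ faces the same difficulty on the second leg.

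The paper sidesteps $(\star)$ with a single device: instead of conditioning on $\{T_{1}>t\}$, raise the barrier to $b=(x+1)/2$ (taking $x\leq y$ without loss of generality). Since $T_{b}\leq T_{1}$,
\formula{
\int_{A}p_{1}^{(\mu)}(t,x,y)\,dy\;\geq\;\ex_{x}^{(1/2)}\!\left[t<T_{b}^{(1/2)},\,R_{t}\in A;\;\Bigl(\tfrac{R_{t}}{x}\Bigr)^{\mu-1/2}\exp\!\Bigl(-\tfrac{\mu^{2}-1/4}{2}A_{t}\Bigr)\right],
}
and on $\{t<T_{b}^{(1/2)}\}$ one has $R_{s}>b>x/2$ \emph{pathwise}, whence $A_{t}\leq 4t/x^{2}\leq 4/m$ deterministically: there is no off-$E$ case at all. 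This yields immediately $p_{1}^{(\mu)}(t,x,y)\geq e^{-(4\mu^{2}-1)/(2m)}(y/x)^{\mu-1/2}\,p_{b}^{(1/2)}(t,x,y)$, and then the scaling \eqref{eq:pt1:scaling} combined with \eqref{eq:pt1:12:asympt} (using $x-b=(x-1)/2$ and $(y-1)/2\leq y-b\leq y-1$) identifies $p_{b}^{(1/2)}(t,x,y)$ with $(y/x)$ times the target expression. The paper treats the $\nu\leq 1/2$ inequality by the same barrier-raising device. The point is that shrinking the event from $\{T_{1}>t\}$ to $\{T_{b}>t\}$ costs nothing at the level of the explicit kernel (because $x-b$ and $y-b$ are comparable to $x-1$ and $y-1$), while it makes the exponential weight uniformly bounded without any bridge analysis.
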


\begin{proof} 
Without lost of generality we can assume that $1<x<y$. We put $b=(x+1)/2$ and take $\mu\geq 1/2$. Using (\ref{ac:formula}) and the fact that $T_b^{(1/2)}\leq T_1^{(1/2)}$ we can write for every Borel set $A\subset (1,\infty)$ that
\formula{
\int_{A}p_{1}^{(\mu)}(t,x,y)dy&
\geq \ex_{x}^{(1/2)}\left[t<T_b^{(1/2)},R_t \in
A;\left(\frac{R_{t}}{x}\right)^{\mu-1/2}\exp\left(-\frac{\mu^2 -1/4}{2}\int_{0}^{t}\frac{ds}{R_s^{2}}\right)\right]
}
Since up to time $T_b^{(1/2)}$ we have 
\formula{
\int_{0}^{t}\frac{ds}{R_s^{2}}\leq \frac{4t}{(x+1)^2}\leq \frac{4t}{x^2}\leq \frac{4}{m}\/,
}
we obtain
\formula{
\int_{A}p_{1}^{(\mu)}(t,x,y)dy\geq \exp\left(-\frac{4\mu^2-1}{2m}\right)\ex_{x}^{(1/2)}\left[t<T_b^{(1/2)},R_t
\in A;\left(\frac{R_t}{x}\right)^{\mu-1/2}\right]\/,
}
which gives 
\formula[eq:proof:2]{
p_1^{(\mu)}(t,x,y)\geq \exp\left(-\frac{4\mu^2-1}{2m}\right)\left(\frac{y}{x}\right)^{\mu-1/2}p_b^{(1/2)}(t,x,y)\/.
}
From the other side, the scaling property \eqref{eq:pt1:scaling} and the formula \eqref{eq:pt1:12:asympt} give
\formula{
p_{b}^{(1/2)}(t,x,y) &=\frac{1}{b}p_{1}^{(1/2)}\left(\frac{t}{b^2};\frac{x}{b},\frac{y}{b}\right)\\
&\approx \frac{1}{\sqrt{t}}\frac{y}{x}\exp\left(-\frac{(x-y)^2}{2t}\right)\left(1\wedge \frac{(x-b)(y-b)}{t}\right)\\
&\approx \frac{1}{\sqrt{t}}\frac{y}{x}\exp\left(-\frac{(x-y)^2}{2t}\right)\left(1\wedge \frac{(x-1)(y-1)}{t}\right)\/,
}
where the last equalities follows from
\formula{
   x-b = \frac{x-1}{2}\/,\quad \frac{y-1}{2}\leq y-b\leq y-1\/.
}
It ends the proof for $\mu\geq 1/2$.

For $1/2\geq \nu>0$ we similarly write 
\formula{
\int_{A}p_{b}^{(1/2)}(t,x,y)dy&
\leq \ex_{x}^{(\nu)}\left[t<T_1^{(\nu)},R_t \in
A;\left(\frac{R_{t}}{x}\right)^{1/2-\nu}\exp\left(\frac{\nu^2-1/4}{2}\int_{0}^{t}\frac{ds}{R_s^{2}}\right)\right]
}
and we obtain 
\formula{
p_{b}^{(1/2)}(t,x,y)\leq \exp\left(\frac{4\nu^2-1}{2m}\right)\left(\frac{y}{x}\right)^{1/2-\nu}p_1^{(\nu)}(t,x,y)\/.
}
This together with the above-given estimates for $p_b^{(1/2)}(t,x,y)$ finish the proof. 
\end{proof}

Since for $x,y<C$ and $xy\geq t$, for some fixed $C>1$, we have
\formula{
   \frac{(x\wedge y)^2}{t}\geq \frac{xy}{Ct}\geq \frac{1}{C}\/,
}
applying the results of Proposition \ref{prop:xminy} (with $m=C^{-1}$) and Proposition \ref{prop:upperbounds:xytlarge} gives

\begin{corollary}
\label{Cor:xy:bounded}
For every $C>1$ we have
\formula{
  p_{1}^{(\mu)}(t,x,y)\stackrel{\mu, C}{\approx}& \left(1\wedge\frac{(x-1)(y-1)}{t}\right)\left(\frac{y}{x}\right)^{\mu+1/2} \frac{1}{\sqrt{t}}\exp\left(-\frac{(x-y)^2}{2t}\right)
}
whenever $x,y<C$ and $xy\geq t$.
\end{corollary}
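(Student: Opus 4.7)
The plan is to splice together Proposition \ref{prop:xminy} and Proposition \ref{prop:upperbounds:xytlarge}, using the explicit two-sided asymptotics (\ref{eq:pt1:12:asympt}) for the half-integer case $\mu=1/2$.

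First I would verify that the hypothesis $(x\wedge y)^{2}\geq mt$ of Proposition \ref{prop:xminy} holds with $m=1/C$: since $1\leq x,y<C$ and $xy\geq t$, one has $x\wedge y\geq xy/(x\vee y)\geq xy/C$, hence $(x\wedge y)^{2}\geq xy/C\geq t/C$. This directly yields the lower estimate for $p_1^{(\mu)}(t,x,y)$ in the range $\mu\geq 1/2$ and the upper estimate for $p_1^{(\nu)}(t,x,y)$ in the range $\nu\leq 1/2$, with exactly the announced $(y/x)^{\mu+1/2}$, respectively $(y/x)^{\nu+1/2}$, factor (simply by dividing through by $C_4^{(\mu)}(x/y)^{\mu+1/2}$ or $C_4^{(\nu)}(x/y)^{\nu+1/2}$).

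For the two missing directions I would use Proposition \ref{prop:upperbounds:xytlarge} to pass through the index $1/2$: for $\mu\geq 1/2$ it gives $p_1^{(\mu)}\leq (y/x)^{\mu-1/2}p_1^{(1/2)}$, and inserting (\ref{eq:pt1:12:asympt}) produces precisely the factor $(y/x)^{\mu-1/2}\cdot (y/x)=(y/x)^{\mu+1/2}$ required by the statement; the symmetric computation for $\nu\leq 1/2$ yields the corresponding lower bound. Combining the four resulting inequalities gives the claimed two-sided comparison.

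I do not foresee any genuine obstacle: the corollary is a bookkeeping step reconciling Propositions \ref{prop:upperbounds:xytlarge} and \ref{prop:xminy} on the region where both apply, and the exponents of $y/x$ coming from the two routes are tailored to coincide. The constants depend on $\mu$ through $C_4^{(\mu)}$ and on $C$ through the choice $m=1/C$ in Proposition \ref{prop:xminy}, matching the $\stackrel{\mu,C}{\approx}$ notation.
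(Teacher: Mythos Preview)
Your proposal is correct and follows exactly the paper's approach: the paper simply notes that $x,y<C$ and $xy\geq t$ imply $(x\wedge y)^{2}/t\geq xy/(Ct)\geq 1/C$, so Proposition~\ref{prop:xminy} applies with $m=C^{-1}$, and then combines this with Proposition~\ref{prop:upperbounds:xytlarge} and (\ref{eq:pt1:12:asympt}) precisely as you outline. One tiny remark: in your chain ``$x\wedge y\geq xy/C$, hence $(x\wedge y)^{2}\geq xy/C$'' the implicit step uses $x\wedge y>1$ so that $(x\wedge y)^{2}\geq x\wedge y$; this is harmless but worth making explicit.
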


Finally, we end this section with two propositions related to the case when one of the space variables is close to $1$ and the other is large. We deal with this case separately for $\mu<1/2$ and $\mu\geq 1/2$.

\begin{proposition}
For every $\nu\in (0,1/2)$ there exists constant $C_5^{(\nu)}>0$ such that 
\formula{
p_1^{(\nu)}(t,x,y)\leq C_5^{(\nu)} \frac{1}{\sqrt{t}}\left(\frac{y}{x}\right)^{\nu+1/2}\exp\left(-\frac{(x-y)^2}{2t}\right)\left(1\wedge \frac{(x-1)(y-1)}{t}\right)
}
for $1<x\leq 2\leq y$ and $xy\geq t$.
\end{proposition}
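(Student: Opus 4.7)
The plan is to use the Hunt formula (\ref{eq:hunt:formula2}) and split according to whether $(x-1)(y-1)\geq t$ or $(x-1)(y-1)<t$. In the first regime the factor $1\wedge (x-1)(y-1)/t$ equals $1$ and it suffices to combine the trivial bound $p_1^{(\nu)}\leq p^{(\nu)}$ with the asymptotic expansion (\ref{eq:I:asym:infty}) of $I_\nu$ at infinity; this is applicable because $x\leq 2$ together with $xy\geq t$ forces $xy/t\geq 1$, yielding the desired form $p^{(\nu)}(t,x,y)\leq C_\nu(y/x)^{\nu+1/2}t^{-1/2}\exp(-(x-y)^2/(2t))$.

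For the delicate case $(x-1)(y-1)<t$, I would produce a sharp lower bound on $r_1^{(\nu)}(t,x,y)$. Lemma \ref{lem:q:nu12mu} gives $q_x^{(\nu)}(s)\geq x^{1/2-\nu}q_x^{(1/2)}(s)$, and the ratio $I_\nu(z)/I_{1/2}(z)$ is bounded below by a positive constant on $[1/2,\infty)$ since it is continuous, positive, and tends to $1$ as $z\to\infty$; as $y/(t-s)\geq y/t\geq 1/x\geq 1/2$ throughout the range of integration, this yields $p^{(\nu)}(t-s,1,y)\geq c_\nu y^{\nu-1/2}p^{(1/2)}(t-s,1,y)$. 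Inserting these into the integral defining $r_1^{(\nu)}$, invoking the explicit formula (\ref{eq:rt:12:formula}) for $r_1^{(1/2)}$, and simplifying the algebraic prefactor $x^{1/2-\nu}y^{\nu-1/2}\cdot y/x=(y/x)^{\nu+1/2}$, one arrives at
\[
r_1^{(\nu)}(t,x,y)\geq c_\nu\,\frac{(y/x)^{\nu+1/2}}{\sqrt{2\pi t}}\left[\exp\!\left(-\frac{(x+y-2)^2}{2t}\right)-\exp\!\left(-\frac{(x+y)^2}{2t}\right)\right].
\]

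To produce a matching upper bound on $p^{(\nu)}(t,x,y)$ I would apply Laforgia's inequality (\ref{MBF:ineq:upper}) with $a=y/t$ and $b=xy/t$ to get $I_\nu(xy/t)\leq x^\nu I_\nu(y/t)\exp(y(x-1)/t)$, whence $p^{(\nu)}(t,x,y)\leq p^{(\nu)}(t,1,y)\exp((x-1)(y-1)/t)$ after noting $(x-1)(2y-x-1)/(2t)\leq(x-1)(y-1)/t$ for $1<x\leq 2\leq y$. Combined with the same large-argument asymptotic applied to $p^{(\nu)}(t,1,y)$ and the algebraic identity $(x+y-2)^2-(x-y)^2=4(x-1)(y-1)$, the subtraction $p^{(\nu)}-r_1^{(\nu)}$ collapses to an expression of the form $[C_1-c_\nu\exp(-2(x-1)(y-1)/t)]\exp(-(x-y)^2/(2t))$ plus a lower-order Gaussian tail, from which the required prefactor $(x-1)(y-1)/t$ can be extracted via the elementary inequality $1-e^{-u}\leq u$ with $u=2(x-1)(y-1)/t<2$. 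The main obstacle is matching these two constants: $C_1$, coming from the asymptotic of $I_\nu(xy/t)$, and $c_\nu$, coming from the ratio $I_\nu/I_{1/2}$, agree only in the sharp asymptotic regime $xy/t\to\infty$, so closing the argument seems to require either expansions with explicit $O(t/(xy))$ error terms or a separate treatment when $xy/t$ is bounded, a zone in which $\exp(-(x-y)^2/(2t))$ is already exponentially small in $t$ and absorbs any residual constant.
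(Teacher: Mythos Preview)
Your outline correctly isolates the easy regime $(x-1)(y-1)\geq t$, and your instinct to lower-bound $r_1^{(\nu)}$ via Lemma~\ref{lem:q:nu12mu} and the explicit formula~\eqref{eq:rt:12:formula} is natural. But the obstacle you flag at the end is fatal, and neither of your proposed fixes rescues the argument. The mismatch between $C_1$ (from the asymptotic of $I_\nu$) and $c_\nu=\inf_{z\geq 1/2}I_\nu(z)/I_{1/2}(z)$ is not a mere constant to be absorbed: after subtraction your bracket becomes $C_1 - c_\nu e^{-2(x-1)(y-1)/t}+O(t/(xy))$, which at $x=1$ equals $C_1-c_\nu+O(t/y)$ and is \emph{not} zero. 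Since the target bound carries the factor $(x-1)$, the bracket must vanish as $x\to1$, and an $O(t/y)$ error term cannot produce that. The ``bounded $xy/t$'' regime helps nothing either: the Gaussian $\exp(-(x-y)^2/(2t))$ stays bounded away from zero as $x\to1$, so it cannot manufacture the missing factor $(x-1)$.

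The paper avoids this by never passing through $I_{1/2}$. The point is to arrange the \emph{same} Bessel factor $I_\nu(y/t)$ in both the upper bound for $p^{(\nu)}$ and the lower bound for $r_1^{(\nu)}$, so that it cancels exactly. For the upper bound, Laforgia's inequality~\eqref{MBF:ineq:upper} gives $I_\nu(xy/t)\leq x^\nu e^{(x-1)y/t}I_\nu(y/t)$, hence
\[
p^{(\nu)}(t,x,y)\leq \frac{y^{\nu+1}}{t}\,e^{-(x-y)^2/(2t)}\,e^{-y/t}\,I_\nu\!\left(\frac{y}{t}\right).
\]
For the lower bound on $r_1^{(\nu)}$, instead of comparing $p^{(\nu)}(t-s,1,y)$ to $p^{(1/2)}$, use the monotonicity $I_\nu(y/(t-s))\geq I_\nu(y/t)$ and the crude $\frac{1}{t-s}\geq\frac{1}{\sqrt{t}\sqrt{t-s}}$ to pull $I_\nu(y/t)/\sqrt{t}$ outside the integral; what remains is precisely the $H$-integral~\eqref{eq:H:final}, giving
\[
r_1^{(\nu)}(t,x,y)\geq \frac{y^{\nu+1}}{t\,x^{\nu+1/2}}\,I_\nu\!\left(\frac{y}{t}\right)\exp\!\left(-\frac{(x-1+\sqrt{y^2+1})^2}{2t}\right).
\]
Now both sides share the factor $\frac{y^{\nu+1}}{t}I_\nu(y/t)$ \emph{with coefficient~$1$}, and subtraction yields $p_1^{(\nu)}\leq \frac{y^{\nu+1}}{t}e^{-(x-y)^2/(2t)}e^{-y/t}I_\nu(y/t)\,f_{y,t}(x)$ with
\[
f_{y,t}(x)=1-\frac{1}{x^{\nu+1/2}}\exp\!\left(-\frac{(x-1)(\sqrt{y^2+1}+y-1)}{t}\right),
\]
which \emph{does} vanish at $x=1$. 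A direct derivative bound (mean value theorem) then gives $f_{y,t}(x)\leq 16(x-1)(y-1)/t$, and the asymptotic~\eqref{eq:I:asym:infty} applied at the very end converts $\frac{y^{\nu+1}}{t}e^{-y/t}I_\nu(y/t)$ into $c\,y^{\nu+1/2}/\sqrt{t}$.
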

\begin{proof}
By monotonicity of $I_\nu(z)$, for every $s\in(0,t)$ we have
\formula{
\frac{1}{t-s}\geq\frac{1}{\sqrt{t}}\frac{1}{\sqrt{t-s}}, \quad I_{\nu}\left(\frac{y}{t-s}\right)\geq I_{\nu}\left(\frac{y}{t}\right)\/.
}
Hence, using the right-hand side of (\ref{eq:q:nu12mu}), we get
\formula{
q_x^{(\nu)}(s)\geq \frac{x-1}{\sqrt{2\pi s^3}}\frac{1}{x^{\nu+1/2}}\exp{\left(-\frac{(x-1)^2}{2s}\right)}, \quad 0\leq \nu <1/2, \ s>0\/,
}
and the formula (\ref{eq:transitiondensity:formula}) we get
\formula{
r_1^{(\nu)}(t,x,y)&=\int_0^{t}q_x^{(\nu)}(s)\frac{y^{1+\nu}}{t-s}\exp{\left(-\frac{1+y^2}{2(t-s)}\right)}I_{\nu}\left(\frac{y}{t-s}\right)\,ds\\
&\geq\frac{x-1}{\sqrt{2\pi }}\left(\frac{y}{x}\right)^{\nu+1}\sqrt{\frac{x}{t}}I_{\nu}\left(\frac{y}{t}\right)H(t,(x-1)^2,1+y^2)\\
&={\frac{\sqrt{x}}{t}}\left(\frac{y}{x}\right)^{\nu+1}I_{\nu}\left(\frac{y}{t}\right)\exp{\left(-\frac{(x-1+\sqrt{y^2 +1})^2}{2t}\right)}
}
where the last equality follows from (\ref{eq:H:final}). Using \eqref{MBF:ineq:upper} we obtain
\formula{
p^{(\nu)}(t,x,y) &= \frac{y^{\nu+1}}{t}\exp\left(-\frac{x^2+y^2}{2t}\right)\,\frac{1}{x^{\nu}}I_\nu\left(\frac{xy}{t}\right)\\
&\leq \frac{y^{\nu+1}}{t}\exp\left(-\frac{(x-y)^2}{2t}\right)\exp\left(-\frac{y}{t}\right)I_\nu\left(\frac{y}{t}\right)\/,
}
which together with previously given estimates, \eqref{eq:hunt:formula} and finally \eqref{eq:I:asym:infty} give
\formula{
p_1^{(\nu)}(t,x,y) &\leq \frac{y^{\mu+1}}{t}\exp{\left(-\frac{(x-y)^2}{2t}\right)}\exp{\left(-\frac{y}{t}\right)}I_{\nu}\left(\frac{y}{t}\right)f_{y,t}(x)\\
&\leq c_1 \frac{y^{\mu+1/2}}{\sqrt{t}}\exp{\left(-\frac{(x-y)^2}{2t}\right)}f_{y,t}(x)\/,
}
where 
\formula{
f_{y,t}(x) = 1-\frac{1}{x^{\nu+1/2}}\exp\left(-\frac{(x-1)(\sqrt{y^2+1}+y-1)}{t}\right)\/.
}
By elementary computation we can see that 
\formula{
-f_{y,t}'(x) &=  \frac{1}{x^{\nu+3/2}}\exp\left(-\frac{(x-1)(\sqrt{y^2+1}+y-1)}{t}\right)\left(\frac{\sqrt{y^2+1}+y-1}{t}x+\nu+1/2\right)\\
&\leq \frac{1}{x^{\nu+3/2}}\left(\frac{2xy}{t}+1\right)\leq \frac{4xy}{t} \leq 16\frac{y-1}{t}\/.
}
Here we have used the following inequalities
\formula{
  \sqrt{y^2 +1}+y-1<2y\/,\quad  xy\geq t\/,\quad 1<x\leq 2\leq y\/.
}
Thus, by the mean value theorem, there exists $d=d_{x,y,t}\in (1,x)$ such that 
\formula{
f_{y,t}(x)&= (1-x)f_{y,t}'(d)\leq 16\frac{(x-1)(y-1)}{t}\/.
}
\formula{
p_1^{(\nu)}(t,x,y)&\leq c_1 2^{\nu+9/2} \left(1\wedge \frac{(x-1)(y-1)}{t}\right) \left(\frac{y}{x}\right)^{\nu+1/2}\frac{1}{\sqrt{t}}\exp\left(-\frac{(x-y)^2}{2t}\right)\/.
}
\end{proof}

\begin{proposition}
\label{prop:xyt:large:lower:3}
For every $\mu\geq 1/2$ and $c>1$ there exists constant $C_6^{(\mu)}(c)>0$ such that for every  $1<x\leq c$ and $y\geq 5c(\mu+1)$ we have
\formula{
p_1^{(\mu)}(t,x,y)\geq C_6^{(\mu)}(c)\frac{1}{\sqrt{t}}\left(\frac{y}{x}\right)^{\mu+1/2}\exp{\left(-\frac{(x-y)^2}{2t}\right)}\left(1\wedge \frac{(x-1)(y-1)}{t}\right)\/,
}
whenever $xy\geq t$.
\end{proposition}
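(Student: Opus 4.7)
The plan is to start from the Hunt decomposition (\ref{eq:hunt:formula2}), $p_1^{(\mu)}(t,x,y) = p^{(\mu)}(t,x,y) - r_1^{(\mu)}(t,x,y)$, and bound $r_1^{(\mu)}$ above by a tight multiple of $p^{(\mu)}$ so that the Hunt difference acquires a factor comparable to $1 \wedge (x-1)(y-1)/t$. The guiding intuition is the explicit $\mu = 1/2$ identity (\ref{eq:pt1:12:formula}), in which this cancellation appears automatically because $I_{1/2}(z)\propto \sinh(z)$ splits $p^{(1/2)}(t-s,1,y)$ into $e^{-(1-y)^2/(2(t-s))} - e^{-(1+y)^2/(2(t-s))}$ and the resulting $H$-integral closes.

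For the upper bound on $r_1^{(\mu)}$ I would first apply Lemma \ref{lem:q:nu12mu} to dominate $q_x^{(\mu)}(s) \leq q_x^{(1/2)}(s)/x^{\mu-1/2}$, which is explicit in $x$ and $s$. Inside $p^{(\mu)}(t-s,1,y)$ I would invoke the large-argument asymptotic (\ref{eq:I:asym:infty}) of $I_\mu$; the hypothesis $y \geq 5c(\mu+1)$ is meant precisely to place $y/(t-s) \geq y/t$ in the regime where this expansion is sharp. Evaluating the $s$-integral via formula (\ref{eq:H:final}) with $a = (x-1)^2$ and $b = (y-1)^2$, and expanding $(x+y-2)^2 = (x-y)^2 + 4(x-1)(y-1)$, should yield
\[
r_1^{(\mu)}(t,x,y) \leq \frac{\Phi_\mu}{\sqrt{2\pi t}}\left(\frac{y}{x}\right)^{\mu+1/2} e^{-(x-y)^2/(2t)}\, e^{-2(x-1)(y-1)/t},
\]
with $\Phi_\mu$ close to $1/\sqrt{2\pi}$. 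A matching lower bound for $p^{(\mu)}(t,x,y)$ comes from the same asymptotic applied to $I_\mu(xy/t)$, using $xy/t \geq 1$ together with $xy \geq y \geq 5c(\mu+1)$, giving a constant $\psi_\mu$ close to the same value. Setting $u := (x-1)(y-1)/t$ and subtracting gives
\[
p_1^{(\mu)}(t,x,y) \geq \frac{1}{\sqrt{2\pi t}}\left(\frac{y}{x}\right)^{\mu+1/2} e^{-(x-y)^2/(2t)} \bigl(\psi_\mu - \Phi_\mu e^{-2u}\bigr).
\]

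The hard part will be controlling this bracket from below by a constant multiple of $1 \wedge u$. For $u \geq 1$ the bracket is bounded below by $\psi_\mu - \Phi_\mu e^{-2}$, which is positive once the asymptotic constants are tuned close enough to $1/\sqrt{2\pi}$. The genuinely delicate regime is $u$ small: since $\psi_\mu - \Phi_\mu e^{-2u} \approx (\psi_\mu - \Phi_\mu) + 2\Phi_\mu u$ near $u = 0$, extracting the linear factor of $u$ requires the two leading constants to agree exactly. This is precisely why the hypothesis fixes a quantitative threshold $5c(\mu+1)$: to secure the matching I would not quote the asymptotic of $I_\mu$ separately on each side of the ratio $r_1^{(\mu)}/p^{(\mu)}$, but instead use the ratio inequality (\ref{MBF:ineq:upper}) to bound $I_\mu(y/(t-s))$ directly against $I_\mu(xy/t)$, picking up only the factor $(t/(x(t-s)))^\mu e^{y/(t-s) - xy/t}$. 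The unknown universal asymptotic constant then cancels and one is left with a polynomial factor that the condition $y \geq 5c(\mu+1)$ is designed to dominate, possibly after splitting the $s$-range at $s = t(x-1)/x$ to handle the two orderings of $y/(t-s)$ versus $xy/t$.
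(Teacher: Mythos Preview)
Your diagnosis of the difficulty is correct: separate asymptotic bounds on $I_\mu$ in the numerator and denominator produce constants $\Phi_\mu$, $\psi_\mu$ that need not agree, and without $\psi_\mu \geq \Phi_\mu$ the bracket $\psi_\mu - \Phi_\mu e^{-2u}$ cannot be bounded below by a multiple of $u$ near $u=0$. But your proposed repair has a real gap. The inequality (\ref{MBF:ineq:upper}) bounds $I_\mu(b)/I_\mu(a)$ from above only when $b \geq a$, so the estimate $I_\mu(y/(t-s))/I_\mu(xy/t) \leq (t/(x(t-s)))^\mu e^{y/(t-s)-xy/t}$ you wrote down is valid only for $s \geq t(1-1/x)$. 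For $s$ below that threshold---which is most of $(0,t)$ since $1<x\leq c$---one has $y/(t-s) < xy/t$, and (\ref{MBF:ineq:upper}) then yields only a \emph{lower} bound on the ratio. The upper bound you need in that range is precisely (\ref{MBF:ineq:lower}), which you never invoke and which is available only for $\mu \geq 1/2$; this is the very reason the proposition carries the hypothesis $\mu \geq 1/2$. Splitting the $s$-range does not help unless you supply a second inequality for the small-$s$ part.

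The paper avoids the case split by comparing in two stages, each with a fixed ordering of arguments. First, (\ref{MBF:ineq:upper}) applied with $y/(t-s) \geq y/t$ gives $p^{(\mu)}(t-s,1,y)/p^{(\mu)}(t,1,y) \leq g_y(t-s)/g_y(t)$ where $g_y(w)=w^{-\mu-1}e^{-(y-1)^2/(2w)}$; the hypotheses $y \geq 5c(\mu+1)$, $x\leq c$, $xy \geq t$ force $(y-1)^2/(2t) \geq \mu+1$, so $g_y$ is increasing on $(0,t)$ and this ratio is at most $1$. Hence $r_1^{(\mu)}(t,x,y) \leq p^{(\mu)}(t,1,y)\int_0^t q_x^{(\mu)}(s)\,ds$ with no extraneous constant. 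Second, (\ref{MBF:ineq:lower}) applied with $xy/t \geq y/t$ gives $I_\mu(y/t)/I_\mu(xy/t)\leq x^{\mu}e^{-(x-1)y/t}$, and combining with $\int_0^t q_x^{(\mu)}\leq x^{-2\mu}$ yields $r_1^{(\mu)}(t,x,y)/p^{(\mu)}(t,x,y) \leq \exp\bigl(-(x-1)(2y-x-1)/(2t)\bigr)$, again with coefficient exactly $1$. Thus $p_1^{(\mu)}(t,x,y) \geq p^{(\mu)}(t,x,y)\bigl(1-e^{-(x-1)(y-1)/(2t)}\bigr)$, and the claimed bound follows immediately from $1-e^{-v}\approx 1\wedge v$.
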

\begin{proof}
Let us fix $\mu\geq 1/2$. For every $0<s<t$, using \eqref{MBF:ineq:upper}, we have
\formula{
I_{\mu}\left(\frac{y}{t-s}\right)<I_{\mu}\left(\frac{y}{t}\right)\left(\frac{t}{t-s}\right)^{\mu}\exp\left({\frac{y}{t-s}}\right)\exp\left({-\frac{y}{t}}\right)
}
and consequently
\formula{
\frac{p^{(\mu)}(t-s,1,y)}{p^{(\mu)}(t,1,y)}&<\left(\frac{t}{t-s}\right)^{\mu +1}\exp{\left(-\frac{(y-1)^2}{2}\left(\frac{1}{t-s}-\frac{1}{t}\right)\right)}
=\frac{g_y(t-s)}{g_y(t)}\/,
}
where 
\formula{
g_y(w)=\left(\frac{1}{w}\right)^{\mu +1}\exp\left({-\frac{(y-1)^2}{2w}}\right)\/,\quad w>0\/.
}
Note that 
\formula{
g_y'(w) = \left(\frac{1}{w}\right)^{\mu +2}\exp\left({-\frac{(y-1)^2}{2w}}\right)\left(\frac{(y-1)^2}{2w}-(\mu +1)\right)\/.
}
Since $x\leq c$, $y\geq 5c(\mu+1)>2$ and $xy\geq t$ we have $4(y-1)\geq 2y \geq 2t/c$. Moreover $y-1\geq 4c(\mu+1)$. Thus
\formula{
\frac{(y-1)^2}{2t}\geq \frac{4c(\mu+1)(y-1)}{2t}\geq \mu+1\/.
}
It means that under our assumptions on $x$, $y$ and $t$ the function $g_y(w)$ is increasing on $(0,t)$ and consequently $g_y(t-s)\leq g_y(t)$ for every $0<s<t$.
\formula{
r_{1}^{(\mu)}(t,x,y) &=\int_{0}^{t}q_{x}^{(\mu)}(s)p^{(\mu)}(t-s,1,y)ds\leq p^{(\mu)}(t,1,y)\int_{0}^{t}q_{x}^{(\mu)}(s)ds\\
&= x^{-\mu}\exp{\left(\frac{x^2-1}{2t}\right)}\frac{I_{\mu}\left(y/t\right)}{I_{\mu}\left(xy/t\right)}p^{(\mu)}(t,x,y)\/.
}
The above-given ratio of modified Bessel functions can be estimated from above by using \eqref{MBF:ineq:lower} as follows
\formula{
I_{\mu}\left(\frac{y}{t}\right)\leq I_{\mu}\left(\frac{xy}{t}\right)\exp{\left(-\frac{(x-1)y}{t}\right)}x^{\mu}\/.
} 
Consequently
\formula{r_{1}^{(\mu)}(t,x,y)\leq p^{(\mu)}(t,x,y)\exp{\left(-\frac{(x-1)(2y-x-1)}{2t}\right)}.}
Finally observe that $2y-x-1>y-1$ and we arrive at
\formula{
p_{1}^{(\mu)}(t,x,y)&\geq \left(1-\exp{\left(-\frac{(x-1)(y-1)}{2t}\right)}\right)p^{(\mu)}(t,x,y)\\
&\stackrel{\mu}{\approx}\left(1\wedge \frac{(x-1)(y-1)}{t}\right)\frac{1}{\sqrt{t}}\left(\frac{y}{x}\right)^{\mu+1/2}\exp{\left(-\frac{(x-y)^2}{2t}\right)}\/.
}
This ends the proof.
\end{proof}

The proof of (\ref{eq:mainthm}) in the case $xy\geq t$ can be deduced from above-given propositions in the following way. Let $\mu\geq 1/2$ and without any loss of generality we assume that $x\leq y$. The upper bounds for every $x,y>1$ are given in Proposition \ref{prop:upperbounds:xytlarge}. From Proposition \ref{prop:xy:away} we know that the lower bounds are valid for $x,y>C_3^{(\mu)}$. If $x\leq C_3^{(\mu)}$ and $y\geq 5C_3^{(\mu)}(\mu+1)$ then the lower bounds are given in Proposition \ref{prop:xyt:large:lower:3}. Finally, taking $C=5C_3^{(\mu)}(\mu+1)$ in Corollary \ref{Cor:xy:bounded} we get the lower bounds in the remaining range of the parameters $x$ and $y$. The proof for $\nu\leq 1/2$ is obtained in the same way.




\section{Estimates for $xy/t$ small}
\label{section:xyt:small}
In this section we provide estimates of $p_1^{(\mu)}(t,x,y)$ whenever $xy<t$. Note also that (\ref{eq:mainthm}) can be written in the following shorter way
\formula{
   p_1^{(\mu)}(t,x,y) \stackrel{\mu}{\approx}\frac{x-1}{x}\frac{y-1}{y}\left(\frac{y^2}{t}\right)^{\mu+1/2}\frac{1}{\sqrt{t}}\exp\left(-\frac{x^2+y^2}{2t}\right)\/,
}
whenever $xy<t$. The main difficulty is to obtain the estimates when one of the space parameters is close to $1$ and the other is large, i.e. tends to infinity. In this case we have to take care of cancellations of two quantities  appearing in (\ref{eq:hunt:formula}) but also not to lose a control on the exponential behaviour. We begin with the upper bounds.
\begin{proposition}
\label{prop:xyt:small:upper}
   For every $\mu>0$, there exists constant $C_7^{(\mu)}>0$ such that 
   \formula{
       p_1^{(\mu)}(t,x,y)\leq C_7^{(\mu)} \frac{x-1}{x}\frac{y-1}{y}\left(\frac{y^2}{t}\right)^{\mu+1/2}\frac{1}{\sqrt{t}}\exp\left(-\frac{x^2+y^2}{2t}\right)\/,
   }
   whenever $xy\leq t$.
\end{proposition}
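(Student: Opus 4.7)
The proof works from the Hunt formula (\ref{eq:hunt:formula2}),
\[
p_1^{(\mu)}(t,x,y)=p^{(\mu)}(t,x,y)-\int_0^t q_x^{(\mu)}(s)\,p^{(\mu)}(t-s,1,y)\,ds,
\]
combined with the observation that in the regime $xy\le t$ the Bessel argument $xy/t$ is bounded by $1$. The series (\ref{eq:I:asym:zero}) then gives $I_\mu(z)\le c_\mu z^\mu$ on $[0,1]$, so substituting into (\ref{eq:transitiondensity:formula}) yields
\[
p^{(\mu)}(t,x,y)\le C_\mu\,\frac{y^{2\mu+1}}{t^{\mu+1}}\exp\!\left(-\frac{x^2+y^2}{2t}\right).\qquad(\star)
\]
The target upper bound is precisely $(\star)$ multiplied by the prefactor $\frac{(x-1)(y-1)}{xy}$. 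When both $x,y\ge 2$ this prefactor is bounded below by $1/4$, and the trivial bound $p_1^{(\mu)}\le p^{(\mu)}$ combined with $(\star)$ closes the case.

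The remaining cases require producing the small factors $(x-1)/x$ and $(y-1)/y$ from the cancellation in Hunt's formula. I would first handle $1<x\le 2$ with $y$ arbitrary (satisfying $xy\le t$). By the sharp density (\ref{hittingtime:estimates}), $q_x^{(\mu)}$ concentrates on $s\lesssim (x-1)^2$, which is much smaller than $t\ge xy>1$; on that range $p^{(\mu)}(t-s,1,y)$ agrees with $p^{(\mu)}(t,x,y)$ up to a multiplicative error of order $s/t+(x-1)/t$, arising from the $(y/x)^\mu$ prefactor, the $(t-s)^{-\mu-1}$ factor, and the exponent change $(x^2-1)/(2t)-s(x^2+y^2)/(2t(t-s))$ in the small-argument expansion of $I_\mu$. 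Combined with the survival tail $\mathbf{P}_x^{(\mu)}(T_1>t)$, which by (\ref{sp:estimate:mu}) is of order $(x-1)/x$ for $x$ near $1$, this gives
\[
r_1^{(\mu)}(t,x,y)\ge\Bigl(1-C\tfrac{x-1}{x}\Bigr)p^{(\mu)}(t,x,y)\quad\Longrightarrow\quad p_1^{(\mu)}(t,x,y)\le C'\,\tfrac{x-1}{x}\,p^{(\mu)}(t,x,y),
\]
and $(\star)$ finishes the single-factor estimate. The analogous $(y-1)/y$ estimate (for $1<y\le 2$, $x$ arbitrary) follows from the symmetry relation (\ref{eq:pt1:symmetry}) applied to this result with the roles of $x$ and $y$ interchanged.

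If $y\ge 2$ then $(y-1)/y\ge 1/2$ and the single-factor $(x-1)/x$ bound is already of the required form; symmetrically when $x\ge 2$. The only remaining case is $1<x,y\le 2$, which I would treat via the Chapman--Kolmogorov identity
\[
p_1^{(\mu)}(t,x,y)=\int_1^\infty p_1^{(\mu)}(t/2,x,z)\,p_1^{(\mu)}(t/2,z,y)\,dz,
\]
splitting the $z$-integration into the subregion where both $xz,yz\le t/2$ hold (apply the two single-factor bounds to the two factors) and its complement (use the estimates of Section~\ref{section:xyt:large}). The resulting Gaussian-type integral in $z$ then reproduces the full target with both factors $(x-1)/x$ and $(y-1)/y$ present. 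The main technical obstacle is the single-factor cancellation argument when $y$ is large: for $s$ close to $t$ one may have $y/(t-s)>1$, so the small-argument expansion of $I_\mu$ fails on that sub-range, and there the ratio bound (\ref{MBF:ineq:upper}) together with (\ref{eq:I:asym:infty}) must be used to keep the exponential rate $\exp(-(x^2+y^2)/(2t))$ intact uniformly across the splitting of the $s$-integral.
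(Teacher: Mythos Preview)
Your three-case outline (both variables $\ge 2$; one variable near $1$; both near $1$) matches the paper's, and the two main tools---Hunt cancellation for the mixed case, Chapman--Kolmogorov for the both-small case---are the same. The implementations differ in ways worth noting.

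For the mixed case $1<x\le 2\le y$, the paper does not attempt the direct lower bound $r_1^{(\mu)}\ge(1-C(x-1)/x)\,p^{(\mu)}$ you sketch. Instead it truncates the Hunt integral at $s=1/2$ (only making $p_1^{(\mu)}$ larger) and writes the result as an \emph{exact} sum $J_1+J_2+J_3$: the mass defect $1-x^{-2\mu}$ together with the tail $\pr_x^{(\mu)}(1/2<T_1<\infty)$; the spatial shift $p^{(\mu)}(t,x,y)-p^{(\mu)}(t,1,y)$; and the temporal shift $\int_0^{1/2}q_x^{(\mu)}(u)\bigl(p^{(\mu)}(t,1,y)-p^{(\mu)}(t-u,1,y)\bigr)du$. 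Each term is then bounded by $C(x-1)\cdot(y^{2\mu+1}/t^{\mu+1})e^{-(x^2+y^2)/(2t)}$ using the mean value theorem and the elementary estimate $\int_0^{1/2}u\,q_x^{(\mu)}(u)\,du\le c(x-1)$. The truncation at $s=1/2$ completely sidesteps your ``main technical obstacle'' (the region where $y/(t-s)$ may be large), so the small-argument expansion of $I_\mu$ suffices throughout and (\ref{MBF:ineq:upper}) is not needed here. Your multiplicative-error claim ``of order $s/t+(x-1)/t$'' is workable but requires care: the change from $x$ to $1$ produces an $O(x-1)$ term (not $O((x-1)/t)$) before the cancellation of $x^{\mu}$ against $I_\mu(xy/t)\approx c(xy/t)^\mu$, and the exponent shift contributes a factor $\exp\bigl(-\tfrac{(1+y^2)s}{2t(t-s)}\bigr)$ which you must keep track of uniformly in $y\le t$.

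For the both-small case $1<x,y\le 2$, the paper uses a \emph{three}-step Chapman--Kolmogorov. The middle kernel $p_1^{(\mu)}(t/3,z,w)$ is bounded pointwise by $c\,t^{-\mu-1}(w/y)^{2\mu+1}$ via (\ref{pt:estimate:upper}), so the two outer integrals collapse (after the symmetry relation (\ref{eq:pt1:symmetry})) to the survival probabilities $\pr_x^{(\mu)}(T_1>t/3)$ and $\pr_y^{(\mu)}(T_1>t/3)$, which are $\stackrel{\mu}{\approx}(x-1)$ and $(y-1)$ by (\ref{sp:estimate:mu}). This delivers both factors in one stroke, without needing to split the $z$-range or invoke the Section~\ref{section:xyt:large} estimates on a complement region as your two-step version does. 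Your route should work, but it requires the single-factor bound to be already established for arbitrary second argument, and the complement region $z>t/(2\max(x,y))$ needs an argument that still extracts both factors $(x-1)$ and $(y-1)$ simultaneously; the three-step trick avoids this bookkeeping entirely.
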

\begin{proof}
   If $x,y>2$ the result follows immediately from the general estimate $p_1^{(\mu)}(t,x,y)\leq p^{(\mu)}(t,x,y)$ and (\ref{eq:I:asym:zero}) which gives
   \formula[pt:estimate:xyt:small]{
      p^{(\mu)}(t,x,y)\approx \left(\frac{y^2}{t}\right)^{\mu+1/2}\frac{1}{\sqrt{t}}\exp\left(-\frac{x^2+y^2}{2t}\right)\/,\quad \frac{xy}{t}\leq 1\/.
   }
   Note that for every $x,y>0$ and $t>0$ there exists $c_1>0$ such that 
   \formula[pt:estimate:upper]{
      p^{(\mu)}(t,x,y)\leq c_1 \frac{y^{2\mu+1}}{t^{\mu+1}}\/.
   }
   If $xy<t$, then it immediately follows from (\ref{pt:estimate:xyt:small}) by estimating the exponential term by $1$. For $xy\geq t$ we use the asymptotic  behaviour (\ref{eq:I:asym:infty}) to show that
   \formula[pt:eestimate:xyt:large]{
      p^{(\mu)}(t,x,y)\approx\frac{1}{\sqrt{t}}\left(\frac{y}{x}\right)^{\mu+1/2}\exp\left(-\frac{|x-y|^2}{2t}\right)\leq \frac{y^{2\mu+1}}{t^{\mu+1}}\left(\frac{t}{xy}\right)^{\mu+1/2}\leq\frac{y^{2\mu+1}}{t^{\mu+1}}
   }
%
   In particular, for all $z,w>1$ and $1<y<2$ there exists $c_2>0$ such that
   \formula[pt:estimate:rel]{
      p^{(\mu)}(t/3,z,w)\leq c_2\left(\frac{w}{y}\right)^{2\mu+1}\frac{1}{t^{\mu+1}}\/.
   }
  The Chapman-Kolmogorov equation and estimating the middle term using (\ref{pt:estimate:rel}) give
   \formula{
      p_1^{(\mu)}(t,x,y) &= \int_1^\infty\int_1^\infty p_1^{(\mu)}(t/3,x,z)p_1^{(\mu)}(t/3,z,w)p_1^{(\mu)}(t/3,w,y)dzdw\\
      &\leq \frac{c_3}{t^{\mu+1}} \int_1^\infty p_1^{(\mu)}(t/3,x,z)dz \int_1^\infty \left(\frac{w}{y}\right)^{2\mu+1} p_1^{(\mu)}(t/3,w,y)dw\\
     &= \frac{c_3}{t^{\mu+1}} P^{(\mu)}_x(T^{(\mu)}_1>t/3)P^{(\mu)}_y(T^{(\mu)}_1>t/3)\/.
   }
   Here the last equality follows from the symmetry property (\ref{eq:pt1:symmetry}).
   Since, by (\ref{sp:estimate:mu}), whenever $xy<t$ and $1<x,y<2$ we have
   \formula{
      P^{(\mu)}_x(T^{(\mu)}_1>t/3) &= P^{(\mu)}_x(\infty>T^{(\mu)}_1>t/3)+P^{(\mu)}_x(T^{(\mu)}_1=\infty)\approx \frac{x-1}{t^{\mu}}+1-\frac{1}{x^{2\mu}}\approx x-1\/,
   }
  which ends the proof of the upper-bound in this case. 
  
  Now assume that $y\geq 2$, $1<x\leq 2$ and $xy\leq t$. The other case $x\geq 2$, $1<y\leq 2$ follows from the symmetry condition mentioned above. Using the fact that $\int_0^\infty q_x^{(\mu)}(u)du=x^{-2\mu}$ and (\ref{eq:hunt:formula}), we can write
  \formula{
     p_1^{(\mu)}(t,x,y) &\leq p^{(\mu)}(t,x,y)-\int_0^{1/2}q_x^{(\mu)}(u)p^{(\mu)}(t-u,1,y)du\\
     &= J_1(t,x,y)+J_2(t,x,y)+J_3(t,x,y)\/,
  }
  where
  \formula{
     J_1(t,x,y) &= p^{(\mu)}(t,x,y)-\frac{1}{x^{2\mu}}p^{(\mu)}(t,x,y)+\pr^{(\mu)}_x(\infty>T_1^{(\mu)}>1/2)p^{(\mu)}(t,x,y)\/,\\
     J_2(t,x,y) &= \pr^{(\mu)}_x(T_1^{(\mu)}\leq 1/2)(p^{(\mu)}(t,x,y)-p^{(\mu)}(t,1,y))\/,\\
     J_3(t,x,y) &= \int_0^{1/2} q_x^{(\mu)}(u)(p^{(\mu)}(t,1,y)-p^{(\mu)}(t-u,1,y))\/,du\/.
   }
   It is obvious that for $1<x<2$ we have
   \formula{
     J_1(t,x,y)\leq c_4 (x-1)p^{(\mu)}(t,x,y)\/.
     }
   To deal with $J_2(t,x,y)$ note that the differentiation formula (\ref{eq:I:diff}), the asymptotic behavior (\ref{eq:I:asym:zero}) and positivity of $I_\mu(z)$ give 
   \formula{
      \dfrac{d}{dx}\left[e^{-x^2/2t}\left(\frac{t}{xy}\right)^{\mu}I_\mu\left(\frac{xy}{t}\right)\right] &= -\frac{x}{t}e^{-x^2/2t}\left(\frac{t}{xy}\right)^{\mu}I_\mu\left(\frac{xy}{t}\right)+e^{-x^2/2t}\frac{y}{t}\left(\frac{t}{xy}\right)^{\mu}I_{\mu+1}\left(\frac{xy}{t}\right)\\
      &\leq c_5 e^{-x^2/2t}\left(\frac{xy}{t}\right)^2\leq c_5\/,
   }
   whenever $xy<t$. Consequently, by mean value theorem, we obtain
   \begin{eqnarray*}
      J_2(t,x,y)\leq (p^{(\mu)}(t,x,y)-p^{(\mu)}(t,1,y))\leq c_5 (x-1)\left(\frac{y^2}{t}\right)^{\mu+1/2}\frac{1}{\sqrt{t}}e^{-y^2/2t}\/.
   \end{eqnarray*}
   Finally, the bounds of $J_3(t,x,y)$ follow from the estimates for the derivative of $p^{(\mu)}(t,1,y)$ in $t$. Using once again (\ref{eq:I:diff}) and skipping the negative components we have
   \begin{eqnarray*}
      h(t,y) &\stackrel{def}{=}& \dfrac{d}{dt}\left(\frac{1}{t^{\mu+1}}e^{-\frac{1+y^2}{2t}}\left(\frac{t}{y}\right)^{\mu}I_\mu\left(\frac{y}{t}\right)\right)\\
       &=& e^{-(1+y^2)/(2t)}\frac{I_\mu(y/t)}{ty^\mu}\left(-\frac{\mu+1}{t}+\frac{1+y^2}{2t^2}-\frac{y}{t}\frac{I_{\mu+1}(y/t)}{I_{\mu}(y/t)}\right)\\
      &\leq& e^{-(1+y^2)/(2t)}\frac{I_\mu(y/t)}{ty^\mu}\frac{1+y^2}{2t^2}\leq c_6 e^{-(1+y^2)/(2t)} \frac{1}{t^{\mu+1}}\/,
   \end{eqnarray*}
   whenever $y<t$. Thus, there exists $c=c_{\mu,u,y}\in(t-u,t)$ such that
   \begin{eqnarray*}
      J_3(t,x,y) &=& \int_0^{1/2}q_x^{(\mu)}(u)u y^{2\mu+1}h(c_{\mu,u,y},y)du \leq c_6 y^{2\mu+1}\int_0^{1/2}q_x^{(\mu)}(u)u e^{-(1+y^2)/(2c)} \frac{1}{c^{\mu+1}}du\\
      &\leq& c_6 e^{-(1+y^2)/(2t)}\frac{y^{2\mu+1}}{(t/2)^{\mu+1}}\int_0^{1/2}u q_{x}^{(\mu)}(u)du\/.
   \end{eqnarray*}
   Taking into account the upper bounds given in (\ref{hittingtime:estimates}) we get
   \formula{
      \int_0^{1/2}u q_{x}^{(\mu)}(u)du\leq c_7 \frac{x-1}{x^{\mu+1/2}}\int_0^{1/2}e^{-(x-1)^2/(2u)}\frac{du}{u^{1/2}}\leq c_8 (x-1)\/.
   }
    This ends the proof.
\end{proof}

The proof of the lower bounds is split into two parts. Next proposition corresponds to the case when $y>x>1$ and ${(y-1)^2}/{t}$ is large. Moreover, we enlarge the region and assume that $xy<mt$ for a given $m\geq 1$.  It is forced by the lower bounds given in Proposition \ref{prop:xyt:large:lower:3}, where it is required to have $xy/t$ sufficiently large but also by the proof of Proposition \ref{prop:xyt:small:lower:2}.
\begin{proposition}
\label{prop:xyt:small:lower:1}
For every $\mu>0$ and $m\geq 1$, there exists constant $C^{(\mu)}_8(m)>0$ such that 
\formula{
  \frac{p_1^{(\mu)}(t,x,y)}{p^{(\mu)}(t,x,y)}\geq C^{(\mu)}_8(m)\frac{x-1}{x}\/,\quad y>x>1
} 
whenever $xy<m t$ and $\frac{(y-1)^2}{t}\geq 2(\mu+1)$.
\end{proposition}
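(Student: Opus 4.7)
The plan is to follow the template of Proposition \ref{prop:xyt:large:lower:3}. Starting from the Hunt formula $p_1^{(\mu)} = p^{(\mu)} - r_1^{(\mu)}$, the task reduces to bounding $r_1^{(\mu)}/p^{(\mu)}$ from above by $1 - C(\mu,m)(x-1)/x$. A direct computation of $g_y'(w)$ for $g_y(w) = w^{-(\mu+1)}\exp(-(y-1)^2/(2w))$ shows that the hypothesis $(y-1)^2/t \geq 2(\mu+1)$ is exactly the condition making $g_y$ increasing on $(0,t]$. Combined with the Laforgia upper bound \eqref{MBF:ineq:upper}, exactly as in Proposition \ref{prop:xyt:large:lower:3}, this gives $p^{(\mu)}(t-s,1,y) \leq p^{(\mu)}(t,1,y)$ for every $s \in (0,t)$, so
\[
r_1^{(\mu)}(t,x,y) \leq p^{(\mu)}(t,1,y)\,\pr^{(\mu)}_x(T^{(\mu)}_1\leq t) \leq x^{-2\mu} p^{(\mu)}(t,1,y).
\]

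The next step is to compare $p^{(\mu)}(t,1,y)$ with $p^{(\mu)}(t,x,y)$. The explicit formula yields the ratio $x^{\mu} e^{(x^2-1)/(2t)}I_\mu(y/t)/I_\mu(xy/t)$, and here I must depart from Proposition \ref{prop:xyt:large:lower:3}: the Laforgia lower bound \eqref{MBF:ineq:lower} is unsuitable in our regime because it requires $\mu \geq 1/2$ and the exponential $e^{-(x-1)y/t}$ it extracts is ineffective when $y/t$ is small. The natural replacement uses that $xy/t \leq m$ is bounded: writing $I_\mu(z) = (z/2)^\mu f_\mu(z)/\Gamma(\mu+1)$ with the entire function $f_\mu(z) = \sum_{k \geq 0}(z/2)^{2k}/[k!(\mu+1)_k]$, which is strictly increasing on $[0,m]$, and keeping only the $k=1$ term of the increment $f_\mu(xy/t) - f_\mu(y/t) \geq (y/t)^2(x^2-1)/[4(\mu+1)]$, then dividing and using $f_\mu(xy/t) \leq f_\mu(m)$, I upgrade the crude estimate $I_\mu(y/t)/I_\mu(xy/t) \leq x^{-\mu}$ to the sharper
\[
\frac{I_\mu(y/t)}{I_\mu(xy/t)} \leq x^{-\mu}\bigl(1 - c(\mu,m)(y/t)^2(x-1)\bigr),
\]
and the chain of estimates culminates in
\[
\frac{r_1^{(\mu)}(t,x,y)}{p^{(\mu)}(t,x,y)} \leq x^{-2\mu}e^{(x^2-1)/(2t)}\bigl(1 - c(\mu,m)(y/t)^2(x-1)\bigr).
\]

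The last step, and the main obstacle, is to verify that this right-hand side is dominated by $1 - C(\mu,m)(x-1)/x$. The hypothesis $(y-1)^2/t \geq 2(\mu+1)$ gives $(y/t)^2 \geq 2(\mu+1)/t$, which is precisely what is needed to absorb the unfavourable linear term $1/t$ produced by $e^{(x^2-1)/(2t)}$ at $x=1$: a Taylor expansion at $x=1$ reduces matters to the inequality $2\mu - 1/t + (y/t)^2/[2(\mu+1)f_\mu(m)] \geq C$, and the bound $e^{(x^2-1)/(2t)} \leq e^{m/2}$ coming from $xy \leq mt$ controls the higher-order remainders on some neighborhood of $x=1$. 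The hard part I foresee is the subregion where $x$ is bounded away from $1$: there $(x-1)/x$ is itself bounded below, so one only needs a positive-constant lower bound on $p_1^{(\mu)}/p^{(\mu)}$, but providing such a uniform bound may require a supplementary argument (for instance, a Chapman--Kolmogorov decomposition combined with the estimates of Section \ref{section:xyt:large}) to treat the transition region near $xy=t$ where the exponential factor $e^{(x^2-1)/(2t)}$ and the Bessel-function ratio conspire least favourably.
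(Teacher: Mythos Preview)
Your framework matches the paper's exactly up to the point where you obtain
\[
\frac{p_1^{(\mu)}(t,x,y)}{p^{(\mu)}(t,x,y)}\ \ge\ 1 - x^{-2\mu}\,e^{(x^2-1)/(2t)}\,\frac{x^{\mu}I_\mu(y/t)}{I_\mu(xy/t)}\cdot x^{-\mu}
\ \ge\ 1 - x^{-2\mu}\,e^{(x^2-1)/(2t)},
\]
using the monotonicity of $g_y$ and of $z\mapsto z^{-\mu}I_\mu(z)$.  Where you diverge is in trying to sharpen the Bessel ratio via the power series $f_\mu$; this is not needed, and it is precisely what generates the ``hard part'' you flag at the end.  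The paper works directly with the crude inequality above and disposes of it by a three-way split on $t$ and $x$.  Fix $t_0=t_0(\mu,m):=2(2e^m)^{1/\mu}/\mu$.  If $t>t_0$ and $1<x<(2e^m)^{1/(2\mu)}$, replace $1/t$ by $1/t_0$ and apply the mean value theorem to $h(x)=1-x^{-2\mu}\exp(\alpha(x^2-1))$ with $\alpha=1/(2t_0)$; since $h(1)=0$ and $h'(d)=2d^{-2\mu-1}e^{\alpha(d^2-1)}(\mu-\alpha d^2)$ with $\alpha d^2<\mu/2$ on this range, one reads off $h(x)\ge c(m)(x-1)$.  If $t>t_0$ and $x\ge(2e^m)^{1/(2\mu)}$, then $x^{-2\mu}\le(2e^m)^{-1}$ while $x^2<xy<mt$ forces $e^{(x^2-1)/(2t)}\le e^{m/2}$, so the product is at most $1/2$ and $(x-1)/x$ is itself bounded below.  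Thus your anticipated ``hard part'' is the trivial case, and no Chapman--Kolmogorov step is needed.

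What your outline is genuinely missing is the remaining regime $t\le t_0$.  Here the paper does not use the inequality above at all; instead it invokes the absolute continuity \eqref{ac:formula} on $\{T_1>t\}$ (where $R_s>1$ and $t\le t_0$ bound the exponential functional) to compare with $p_1^{(1/2)}$, and then the explicit formula \eqref{eq:pt1:12:formula} together with $(y-1)^2/t\ge 2(\mu+1)$ gives $(x-1)(y-1)/t\gtrsim (x-1)/\sqrt{t_0}\approx(x-1)/x$.  This closes the argument cleanly.  In short: drop the $f_\mu$-refinement, keep only $I_\mu(y/t)/I_\mu(xy/t)\le x^{-\mu}$, and replace your Taylor-at-$x=1$ plus Chapman--Kolmogorov plan by the threefold case split above.
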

 \begin{proof}
 Since
\formula{
\frac{p_1^{(\mu)}(t,x,y)}{p^{(\mu)}(t,x,y)} &= 1-\frac{p^{(\mu)}(t,1,y)}{p^{(\mu)}(t,x,y)}\frac{r_1^{(\mu)}(t,x,y)}{p^{(\mu)}(t,1,y)}\/,
}
using (\ref{MBF:ineq:upper}) for every $\mu>0$ and $(y-1)^2/t\geq 2(\mu+1)$, we have
\formula{
\frac{r_1^{(\mu)}(t,x,y)}{p^{(\mu)}(t,1,y)} &= \int_0^t q_x^{(\mu)}(s)\frac{p^{(\mu)}(t-s,1,y)}{p^{(\mu)}(t,1,y)}\,ds\\
& = \int_0^t q_x^{(\mu)}(s)\frac{t}{t-s} \exp\left(-\frac{1+y^2}{2t}\frac{s}{t-s}\right)\frac{I_\mu(y/(t-s))}{I_\mu(y/t)}\,ds\\
& \leq \int_0^t q_x^{(\mu)}(s)\left(\frac{t}{t-s}\right)^{\mu+1}\exp\left(-\frac{(y-1)^2}{2t}\frac{s}{t-s}\right)\,ds\/.
}
For every $s<t$ we can write
\formula[eq:fw:ratio]{
  \left(\frac{t}{t-s}\right)^{\mu+1}\exp\left(-\frac{(y-1)^2}{2t}\frac{s}{t-s}\right) = \frac{f_y(t-s)}{f_y(t)}\/,
}
where $f_y(w) = w^{-\mu-1}e^{-(y-1)^2/2w}$. Then by simple calculation we get $f'_y(w) = w^{-\mu-2}e^{-(y-1)^2/2w}\left((\frac{(y-1)^2}{2w}-(\mu+1)\right)$ and consequently $f_y(w)$ is increasing on $\left(0,\frac{(y-1)^2}{2(\mu+1)}\right)$. It implies that right-hand side of (\ref{eq:fw:ratio}) is smaller than $1$ whenever $\frac{(y-1)^2}{t}\geq 2(\mu+1)$ and
\formula{
\frac{p_1^{(\mu)}(t,x,y)}{p^{(\mu)}(t,x,y)}&\geq 1-x^\mu\exp\left(\frac{x^2-1}{t}\right)\frac{I_\mu({y}/{t})}{I_\mu({xy}/{t})} \int_0^t q_x^{(\mu)}(s)\,ds\/.
}
Since the function $z^{-\mu}I_{\mu}(z)$ is increasing on $(0,\infty)$ 
\formula{
  \frac{I_\mu\left({y}/{t}\right)}{I_\mu\left({xy}/{t}\right)}\leq \frac{1}{x^\mu}\/,\quad x,y>1\quad t\geq 0\/.
}
This, together with $\pr^{(\mu)}_x(T_1^{(\mu)}<\infty)=x^{-2\mu}$, gives
\formula[LB:basic]{
\frac{p_1^{(\mu)}(t,x,y)}{p^{(\mu)}(t,x,y)}&\geq 1-\frac{1}{x^{2\mu}}\exp\left(\frac{x^2-1}{t}\right)\/.
}

Now we assume that $1<x<(2e^m)^{1/(2\mu)}$ and $t>\frac{2(2e^m)^{1/\mu}}{\mu}$. Then 
\formula{ 
\frac{p_1^{(\mu)}(t,x,y)}{p^{(\mu)}(t,x,y)}\geq 1-\frac{1}{x^{2\mu}}\exp\left(\frac{\mu(x^2-1)}{2(2e^m)^{1/\mu}}\right)
}
The mean value theorem ensures the existence of a constant  $d\in (1,x)$ such that
\formula{
1-\frac{1}{x^{2\mu}}\exp\left(\frac{\mu(x^2-1)}{2(2e^m)^{1/\mu}}\right) &= \frac{2\mu(x-1)}{d^{2\mu+1}}\exp\left(\frac{\mu(d^2-1)}{2(2e^m)^{1/\mu}}\right)\left(1-\frac{d^2}{2(2e^m)^{1/\mu}}\right)\\
&\geq c_1(m)(x-1)
}
where the last inequality comes from the fact that $1<d<x<(2e^m)^{1/(2\mu)}$.

The next step is to take $x\geq (2e^m)^{(1/(2\mu))}$ and $t>\frac{2(2e^m)^{1/\mu}}{\mu}$. Since $x^2<xy<mt$ using (\ref{LB:basic}) we get
\formula{
  \frac{p_1^{(\mu)}(t,x,y)}{p^{(\mu)}(t,x,y)}\geq 1-\frac{1}{x^{2\mu}}e^m\geq 1-\frac{1}{2} \approx \frac{x-1}{x}\/.
} 
Finally, we consider the case when $x>1$, $ xy/m<t\leq\frac{2(2e^m)^{1/\mu}}{\mu}=:t_0$ and $\frac{(y-1)^2}{t}\geq 2(\mu+1)$. Using absolute continuity property (\ref{ac:formula}) and (\ref{eq:pt1:12:formula}), we can write
   \formula{
      p_1^{(\mu)}(t,x,y)&\geq (e^{-t_0(\mu^2/2-1/8)}\wedge 1)\left(\frac{y}{x}\right)^{\mu-1/2}p_1^{(1/2)}(t,x,y)\\
      &\stackrel{\mu,m}{\approx}  \left(1\wedge\frac{(x-1)(y-1)}{t}\right)\left(\frac{y}{x}\right)^{\mu+1/2}\frac{1}{\sqrt{t}}\exp\left(-\frac{x^2+y^2}{2t}\right)\\
       &\geq  \left(1\wedge\frac{(x-1)\sqrt{2(\mu+1)/m}}{t_0}\right)\left(\frac{y^2}{t}\right)^{\mu+1/2}\frac{1}{\sqrt{t}}\exp\left(-\frac{x^2+y^2}{2t}\right)\\
       &\stackrel{\mu,m}{\approx}\frac{x-1}{x}p^{(\mu)}(t,x,y)\/.
   }
   This ends the proof.
\end{proof}
We end this section with the proof of the lower bounds, whenever ${((y\vee x)-1)^2}/{t}$ is small. Note that in the proof of the next proposition we use the lower bounds of $p_1^{(\mu)}(t,x,y)$ for $xy\geq t$ obtained previously in Section \ref{section:xyt:large} as well as the result of Proposition \ref{prop:xyt:small:lower:1}.  As previously, due to the symmetry, it is enough to assume that $y>x>1$.
\begin{proposition}
\label{prop:xyt:small:lower:2}
For every $\mu>0$ there exists constant $C_9^{(\mu)}>0$ such that
\formula{
\frac{p^{(\mu)}_1(t,x,y)}{p^{(\mu)}(t,x,y)}\geq C_9^{(\mu)} \frac{x-1}{x}\frac{y-1}{y}\/,\quad y>x>1\/,
}
whenever $xy<t$ and $\frac{(y-1)^2}{t}\leq 2(\mu+1)$.
\end{proposition}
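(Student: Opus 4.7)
The plan is to apply the Chapman--Kolmogorov identity for the killed semigroup at the midpoint $t/2$, restricted to a ``shell'' of intermediate points at the diffusive scale $\sqrt{t}$, and then to invoke Proposition~\ref{prop:xyt:small:lower:1} twice (once directly, once via the symmetry \eqref{eq:pt1:symmetry}) in order to pick up the two boundary corrections $(x-1)/x$ and $(y-1)/y$ simultaneously.

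First I would fix constants $\alpha_\mu=2+\sqrt{2(\mu+1)}$, $\beta_\mu=\alpha_\mu+1$ and $m$ suitably large (depending only on $\mu$), and set $A=[\alpha_\mu\sqrt{t},\beta_\mu\sqrt{t}]$. The standing hypotheses $(y-1)^2\le 2(\mu+1)t$, $x\le y$ and $xy>1$ force $y\le 1+\sqrt{2(\mu+1)t}$ and $t\ge 1$, and a direct check then gives, for every $z\in A$, the three conditions needed in Proposition~\ref{prop:xyt:small:lower:1}: namely $z>y$, $(z-1)^2/(t/2)\ge 2(\mu+1)$, and $xz,yz<mt/2$. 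Starting from
\begin{equation*}
p_1^{(\mu)}(t,x,y)\ge\int_A p_1^{(\mu)}(t/2,x,z)\,p_1^{(\mu)}(t/2,z,y)\,dz,
\end{equation*}
Proposition~\ref{prop:xyt:small:lower:1} applied to the first factor, and applied to $p_1^{(\mu)}(t/2,y,z)$ followed by the symmetry $p_1^{(\mu)}(t/2,z,y)=(y/z)^{2\mu+1}p_1^{(\mu)}(t/2,y,z)$, yields
\begin{equation*}
p_1^{(\mu)}(t,x,y)\ge\bigl(C_8^{(\mu)}(m)\bigr)^{2}\,\frac{x-1}{x}\,\frac{y-1}{y}\int_A p^{(\mu)}(t/2,x,z)\,p^{(\mu)}(t/2,z,y)\,dz.
\end{equation*}

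The remaining task is to prove the shell estimate
\begin{equation*}
\int_A p^{(\mu)}(t/2,x,z)\,p^{(\mu)}(t/2,z,y)\,dz\ge c_\mu\,p^{(\mu)}(t,x,y).
\end{equation*}
On $A$ the Bessel arguments $2xz/t$ and $2yz/t$ are bounded by $m$, so the monotonicity of $w\mapsto w^{-\mu}I_\mu(w)$ together with \eqref{eq:I:asym:zero} gives the two-sided bound $I_\mu(w)\stackrel{\mu}{\approx}w^\mu$ on $(0,m]$. Inserting this into \eqref{eq:transitiondensity:formula} collapses the integrand to a $\mu$-comparable multiple of $t^{-2\mu-2}\,y^{2\mu+1}\,z^{2\mu+1}\exp(-(x^2+2z^2+y^2)/t)$, and the substitution $u=z/\sqrt{t}$ makes $\int_A z^{2\mu+1}e^{-2z^2/t}dz$ a positive $\mu$-dependent constant times $t^{\mu+1}$. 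Combined with the small-$xy/t$ asymptotic \eqref{pt:estimate:xyt:small}, the desired inequality reduces to $\exp(-(x^2+y^2)/(2t))\ge c_\mu$, which follows since $x\le y$ and $y^2/t\le 2/t+4(\mu+1)\le 6+4\mu$ under the hypotheses.

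The hard part, in my view, is this last step. Keeping track of the two-sided control of $I_\mu$ on bounded arguments is routine, but one must verify that the residual exponential discrepancy between $\exp(-(x^2+y^2)/t)$ (produced by the Chapman--Kolmogorov integrand) and $\exp(-(x^2+y^2)/(2t))$ (appearing in $p^{(\mu)}(t,x,y)$) is controlled by a $\mu$-dependent constant; this is precisely where the hypothesis $(y-1)^2/t\le 2(\mu+1)$ is indispensable, and it is also what dictates the choice of the shell on the $\sqrt{t}$ scale rather than at a fixed distance from the boundary.
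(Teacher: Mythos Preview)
Your argument is correct, and in fact it is somewhat cleaner than the paper's own proof. The paper proceeds by a three-step Chapman--Kolmogorov decomposition: it writes $p_1^{(\mu)}(3t,x,y)$ as a double integral over $z,w\ge\sqrt t$, bounds the middle factor $p_1^{(\mu)}(t,z,w)$ from below using the results of Section~\ref{section:xyt:large} for $zw\ge t$, and reduces matters to a one-variable estimate $F_t^{(\mu)}(x)\ge c\,(x-1)/x$ which is then obtained from Proposition~\ref{prop:xyt:small:lower:1}; a separate case $t\le 4$ is handled directly via absolute continuity and the explicit $\mu=1/2$ formula. Your two-step version, integrating over a compact shell at the $\sqrt t$ scale and invoking Proposition~\ref{prop:xyt:small:lower:1} twice (once through the symmetry \eqref{eq:pt1:symmetry}), avoids both the appeal to Section~\ref{section:xyt:large} and the small-$t$ case split---the latter because $x,y>1$ together with $xy<t$ already forces $t>1$. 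The trade-off is that you must verify slightly more carefully that the shell sits in the region where Proposition~\ref{prop:xyt:small:lower:1} applies (your choice $\alpha_\mu=2+\sqrt{2(\mu+1)}$ does this), whereas the paper's three-step scheme naturally separates the boundary behaviour in $x$ and $y$ at the cost of an extra integration.
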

\begin{proof}
   Let $xy<t$ and $y>x>1$. At the beginning we additionally assume that $t\geq 4$. Note that there exists $c_1>0$ such that for every $s>1/2$ we have $e^{-s}\geq c_1 s^{\mu+1/2}e^{-2s}$. This, together with the lower bounds of $p_1^{(\mu)}(t,z,w)$ for $z,w\geq \sqrt{t}$ (then $zw\geq t$) obtained in Section \ref{section:xyt:large}, enable us to write
   \formula{
      p_1^{(\mu)}(t,z,w)&\geq c_2\left(1\wedge\frac{(z-1)(w-1)}{t}\right)\left(\frac{w}{z}\right)^{\mu+1/2}\frac{1}{\sqrt{t}}\exp\left(-\frac{|z-w|^2}{2t}\right)\\
      &\geq \frac{c_2}{4}\left(\frac{w}{z}\right)^{\mu+1/2}\frac{1}{\sqrt{t}}\exp\left(-\frac{z^2}{2t}\right)\exp\left(-\frac{w^2}{2t}\right)\\
      &\geq \frac{c_2 c_1^2}{4} \left(\frac{wz}{t}\right)^{\mu+1/2}\left(\frac{w^2}{t}\right)^{\mu+1/2}\frac{1}{\sqrt{t}}\exp\left(-\frac{z^2}{t}\right)\exp\left(-\frac{w^2}{t}\right)\\
      &\geq c_3\left(\frac{w^2}{t}\right)^{\mu+1/2}\frac{1}{\sqrt{t}}\exp\left(-\frac{z^2}{t}\right)\exp\left(-\frac{w^2}{t}\right)\/.
   }
   Consequently, using the Chapmann-Kolmogorov equation and (\ref{eq:pt1:symmetry}), we get
   \formula{
      p_1^{(\mu)}\lefteqn{(3t,x,y) = \int_1^\infty\int_1^\infty p_1^{(\mu)}(t,x,z)p_1^{(\mu)}(t,z,w)p_1^{(\mu)}(t,w,y)dzdw}\\
      &\geq \int_{\sqrt{t}}^\infty\int_{\sqrt{t}}^\infty p_1^{(\mu)}(t,x,z)p_1^{(\mu)}(t,z,w)p_1^{(\mu)}(t,w,y)dzdw\\
      &\geq c_3\left(\frac{y^2}{t}\right)^{\mu+1/2}\frac{1}{\sqrt{t}}\int_{\sqrt{t}}^\infty p_1^{(\mu)}(t,x,z)e^{-{z^2}/{t}}dz \int_{\sqrt{t}}^\infty \left(\frac{w}{y}\right)^{2\mu+1}p_1^{(\mu)}(t,w,y)e^{-{w^2}/{t}}dw\\
      &= c_3\left(\frac{y^2}{t}\right)^{\mu+1/2}\frac{1}{\sqrt{t}}F^{(\mu)}_t(x)F^{(\mu)}_t(y)\/,
   }
   where 
   \formula{
     F^{(\mu)}_t(x) &:= \int_{\sqrt{t}}^\infty p_1^{(\mu)}(t,x,z)e^{-{z^2}/{t}}dz\/.
   }
   Since for $t\geq 4$ and $\frac{(y-1)^2}{t}\leq 2(\mu+1)$ we have
   \formula{
      \frac{x^2}{t}\leq \frac{y^2}{t}\leq \left(2\wedge 4\frac{(y-1)^2}{t}\right)\leq c_4
     }
    and consequently
    \formula{
       p^{(\mu)}(3t,x,y)\approx \left(\frac{y^2}{t}\right)^{\mu+1/2}\frac{1}{\sqrt{t}}\/,\quad xy<t\/,
    }
    it is enough to show that $F^{(\mu)}_t(x) \geq c_5 \frac{x-1}{x}$ for every $x>1$. However, for $z\geq b\sqrt{t}$, with $b=2\sqrt{2(\mu+1)}$, and $t\geq 4$ we have $\frac{(z-1)^2}{t}\geq \frac{1}{4}\frac{z^2}{t}\geq 2(\mu+1)$. We can use the lower bounds given in Proposition \ref{prop:xyt:small:lower:1} with $m=2b$ and obtain
    \formula{
      F^{(\mu)}_t(x) &\geq \int_{b\sqrt{t}}^{2bt/x} p_1^{(\mu)}(t,x,z)e^{-{z^2}/{t}}dz\\
      &\geq c_6 \frac{x-1}{x\sqrt{t}}\int_{b\sqrt{t}}^{2bt/x} \left(\frac{z^2}{t}\right)^{\mu+1/2}e^{-{z^2}/{2t}}e^{-{z^2}/{t}}dz
      \geq c_7\frac{x-1}{x\sqrt{t}}\int_{b\sqrt{t}}^{2bt/x} e^{-{2z^2}/{t}}dz\\
      &=  c_7\frac{x-1}{x}\int_{b}^{2b\sqrt{t}/x} e^{-{2u^2}}du\geq c_7\frac{x-1}{x}\int_{b}^{2b} e^{-{2u^2}}du\/.
    }
    Finally, for $t\leq 4$, the same computations as in the end of the proof of the previous Proposition (but with $t_0=4$) gives 
   \formula{
      p_1^{(\mu)}(t,x,y)&\geq c_7(e^{-4(\mu^2/2-1/8)}\wedge 1)\left(1\wedge\frac{(x-1)(y-1)}{t}\right)\left(\frac{y^2}{t}\right)^{\mu+1/2}\frac{1}{\sqrt{t}}\exp\left(-\frac{x^2+y^2}{2t}\right)\\
       &\stackrel{\mu}{\approx} \frac{x-1}{x}\frac{y-1}{y}p^{(\mu)}(t,x,y)\/,
   }
   where the last approximation follows from the fact that $(x-1)(y-1)< xy \leq t\leq 4$ which gives
   \formula{
    1\wedge \frac{(x-1)(y-1)}{t} = \frac{(x-1)(y-1)}{t}=\frac{(x-1)(y-1)}{xy}\frac{xy}{t}\approx\frac{(x-1)(y-1)}{xy}\/.
   }
      \end{proof}

\subsection*{Acknowledgments}
The authors are very grateful to Tomasz Byczkowski for critical remarks and comments
which enabled them to improve the presentation of the paper.

 \bibliography{bibliography}

\begin{thebibliography}{10}

\bibitem{BGS:2007}
T.~Byczkowski, P.~Graczyk, and A.~Stos.
\newblock Poisson kernels of half-spaces in real hyperbolic spaces.
\newblock {\em Rev. Mat. Iberoamericana}, 23(1):85--126, 2007.

\bibitem{BMR3:2013}
T.~Byczkowski, J.~Ma{\l}ecki, and M.~Ryznar.
\newblock Hitting times of {Bessel} processes.
\newblock {\em Potential Anal.}, 38:753--786, 2013.

\bibitem{BR:2006}
T.~Byczkowski and M.~Ryznar.
\newblock Hitting distibution of geometric {Brownian} motion.
\newblock {\em Studia Math.}, 173(1):19--38, 2006.

\bibitem{Davies:1987}
E.~B. Davies.
\newblock The equivalence of certain heat kernel and {Green} function bounds.
\newblock {\em J. Funct. Anal.}, 71:88--103, 1987.

\bibitem{Davies:1990}
E.~B. Davies.
\newblock {\em Heat kernels and spectral theory (Cambridge Tracts in
  Mathematics)}, volume~92.
\newblock Cambridge University Press, Cambridge, 1990.

\bibitem{Davies:1991}
E.~B. Davies.
\newblock Intrinsic ultracontractivity and the dirichlet laplacian.
\newblock {\em J. Funct. Anal.}, 100:162--180, 1991.

\bibitem{DaviesSimon:1984}
E.~B. Davies and B.~Simon.
\newblock Ultracontractivity and heat kernels for {Schrödinger} operators and
  {Dirichlet} {Laplacians}.
\newblock {\em J. Funct. Anal.}, 59:335--395, 1984.

\bibitem{Erdelyi:1953:volII}
Erdelyi et~al.
\newblock {\em Higher Transcendental Functions}, volume~II.
\newblock McGraw-Hill, New York, 1953.

\bibitem{GradsteinRyzhik:2007}
I.~S. Gradstein and I.~M. Ryzhik.
\newblock {\em Table of integrals, series and products. 7th edition}.
\newblock Academic Press, London, 2007.

\bibitem{HM:2013b}
Y.~Hamana and H.~Matsumoto.
\newblock Hitting times of {Bessel} processes, volume of {Wiener} sausages and
  zeros of {Macdonald} functions.
\newblock {\em arXiv:1302.4526}.

\bibitem{HM:2012}
Y.~Hamana and H.~Matsumoto.
\newblock The probability densities of the first hitting times of {Bessel}
  processes.
\newblock {\em J. Math-for-Ind.}, 4B:91--95, 2012.

\bibitem{HM:2013a}
Y.~Hamana and H.~Matsumoto.
\newblock The probability distributions of the first hitting times of {Bessel}
  processes.
\newblock {\em Trans. Amer. Math. Soc}, 365:5237--5257, 2013.

\bibitem{Laforgia:1991}
A.~Laforgia.
\newblock Bounds for modified {Bessel} functions.
\newblock {\em J. Comput. Appl. Math.}, 34:263--267, 1991.

\bibitem{MatsumotoYor:2005a}
H.~Matsumoto and M.~Yor.
\newblock Exponential functionals of {Brownian} motion, {I}: {Probability} laws
  at fixed time.
\newblock {\em Probability Surveys}, 2:312--347, 2005.

\bibitem{MatsumotoYor:2005b}
H.~Matsumoto and M.~Yor.
\newblock Exponential functionals of {Brownian} motion, {II}: {Some} related
  diffusion processes.
\newblock {\em Probability Surveys}, 2:348--384, 2005.

\bibitem{NowakRoncal:2013a}
A.~Nowak and L.~Roncal.
\newblock On sharp heat and subordinated kernel estimates in the
  {Fourier-Bessel} setting.
\newblock {\em arXiv:1111.5700}.

\bibitem{NowakRoncal:2013b}
A.~Nowak and L.~Roncal.
\newblock Sharp heat kernel estimates in the {Fourier-Bessel} setting for a
  continuous range of the type parameter.
\newblock {\em arXiv:1208.5199}.

\bibitem{SC:2010}
L.~Saloff-Coste.
\newblock The heat kernel and its estimates.
\newblock {\em Adv. Stud. Pure Math.}, 57:405--436, 2010.

\bibitem{Zhang:2002}
Q.~S. Zhang.
\newblock The boundary behavior of heat kernels of {Dirichlet} {Laplacians}.
\newblock {\em J. Differential Equations}, 182:416--430, 2002.

\end{thebibliography}
\bibliographystyle{plain}
\end{document}